\newcommand{\bc}{\begin{center}}
\newcommand{\ec}{\end{center}}
\newcommand{\ba}{\begin{array}}
\newcommand{\ea}{\end{array}}
\newcommand{\be}{\begin{eqnarray}}
\newcommand{\ee}{\end{eqnarray}}
\newcommand{\bel}{\begin{eqnarray}\label}
\newcommand{\eel}{\end{eqnarray}}
\newcommand{\bes}{\begin{eqnarray*}}
\newcommand{\ees}{\end{eqnarray*}}
\newcommand{\bi}{\begin{itemize}}
\newcommand{\ei}{\end{itemize}}
\newcommand{\bn}{\begin{enumerate}}
\newcommand{\en}{\end{enumerate}}
\newcommand{\sB}{\mathsf{B}}
\newcommand{\dlots}{\ldots}
\newcommand{\blog}{\mathop{\overline{\log}}}
\newcommand{\iid}{{\it i.i.d.\ }}
\newcommand{\bphi}{\boldsymbol{\phi}}
\newcommand{\btheta}{\boldsymbol{\theta}}
\newcommand{\by}{\boldsymbol{y}}
\newcommand{\e}{\mathsf{e}}
\newtheorem{lemma}{Lemma}
\newtheorem{condition}{Condition}
\newtheorem{theorem}{Theorem}
\newtheorem{corollary}{Corollary}
\newcommand{\tr}{\mathop{\mathsf{Tr}}}
\newcommand{\rk}{\mathop{\mathsf{Rk}}}
\newcommand{\diag}{\mathop{\mathsf{diag}}}
\newcommand{\linspan}{\mathop{\mathsf{span}}}
\newcommand{\MSE}{\mathop{\mathsf{MSE}}}
\newcommand{\thetaphat}{\hat{\theta}_k}
\newcommand{\phihat}{\hat{\phi}}
\newtheorem{lemmapp}{Lemma}[section]
\newcommand{\cF}{\mathcal{F}}
\newcommand{\cH}{\mathcal{H}}
\newcommand{\cK}{\mathcal{K}}
\newcommand{\cN}{\mathcal{N}}
\newcommand{\cS}{\mathcal{S}}
\newcommand{\R}{{\rm I}\kern-0.18em{\rm R}}
\newcommand{\h}{{\rm I}\kern-0.18em{\rm H}}
\newcommand{\K}{{\rm I}\kern-0.18em{\rm K}}
\newcommand{\p}{{\rm I}\kern-0.18em{\rm P}}
\newcommand{\E}{{\rm I}\kern-0.18em{\rm E}}
\newcommand{\Z}{{\rm Z}\kern-0.18em{\rm Z}}
\newcommand{\1}{{\rm 1}\kern-0.24em{\rm I}}
\newcommand{\N}{{\rm I}\kern-0.18em{\rm N}}
\newcommand{\field}[1]{\mathbb{#1}}
\newcommand{\X}{\field{X}}
\newcommand{\pp}{{\sf p}}
\newcommand{\pn}{\p_{\kern-0.25em n}}
\newcommand{\pnm}{\p_{\kern-0.25em n,m}}
\newcommand{\psubm}{\p_{\kern-0.25em m}}
\newcommand{\psubp}{\p_{\kern-0.25em p}}
\newcommand{\cfi}{\cF_{\kern-0.25em \infty}}
\newcommand{\argmin}{\mathop{\mathrm{argmin}}}
\newcommand{\eps}{\varepsilon}
\newlength{\minipagewidth}
\begin{document}

\begin{frontmatter}

\title{Aggregation of Affine Estimators}
\runtitle{Aggregation of Affine Estimators}

 \author{\fnms{Dong} \snm{Dai}\corref{}\ead[label=dong]{dongdai@stat.rutgers.edu},
 \fnms{Philippe} \snm{Rigollet}\thanksref{t2}\ead[label=phil]{rigollet@princeton.edu},
\fnms{Lucy} \snm{Xia}\ead[label=lu]{lxia@princeton.edu}
\and \fnms{Tong} \snm{Zhang}\ead[label=tong]{tzhang@stat.rutgers.edu}
}

 \affiliation{Rutgers and Princeton University}

\thankstext{t2}{Supported in part by NSF grants DMS-1317308  and CAREER-DMS-1053987.}

\address{{Dong Dai}\\
{Statistics department} \\
{Rutgers}\\
{Piscataway,NJ 08854, USA}\\
 \printead{dong}}
\address{{Philippe Rigollet}\\
{Department of Operations Research} \\
{ and Financial Engineering}\\
{Princeton University}\\
{Princeton, NJ 08544, USA}\\
 \printead{phil}}
\address{{Tong Zhang}\\
{Statistics department} \\
{Rutgers}\\
{Piscataway,NJ 08854, USA}\\
 \printead{tong}}
\address{{Lucy Xia}\\
{Department of Operations Research} \\
{ and Financial Engineering}\\
{Princeton University}\\
{Princeton, NJ 08544, USA}\\
 \printead{lu}}
%

\runauthor{Dai et al.}

\begin{abstract}
\ We consider the problem of aggregating a general collection of affine estimators for fixed design regression. Relevant examples include some commonly used statistical estimators such as least squares, ridge and robust least squares estimators. Dalalyan and Salmon \cite{DalSal12} have established that, for this problem, exponentially weighted (EW) model selection aggregation leads to sharp oracle inequalities in expectation, but similar bounds in deviation were not previously known. While results \cite{DaiRigZha12} indicate that the same aggregation scheme may not satisfy sharp oracle inequalities with high probability, we prove  that a weaker notion of oracle inequality for EW that holds with high probability. Moreover,  using a generalization of the newly introduced $Q$-aggregation scheme we also prove sharp oracle inequalities that hold with high probability. Finally, we apply our results to universal aggregation and show that our proposed estimator leads simultaneously to all the best known bounds for aggregation, including $\ell_q$-aggregation, $q \in (0,1)$, with high probability.
\end{abstract}

\begin{keyword}[class=AMS]
\kwd[Primary ]{62G08}
\kwd[; secondary ]{62C20, 62G05, 62G20}
\end{keyword}
\begin{keyword}[class=KWD]
Aggregation, Affine estimators, Gaussian mean, Oracle inequalities, Maurey's argument
\end{keyword}

\end{frontmatter}

\section{Introduction}

\setcounter{equation}{0}
In the Gaussian Mean Model (GMM), we observe a Gaussian random vector $Y \in \R^n$ such that $Y\sim\cN(\mu, \sigma^2I_n)$ where the mean $\mu \in \R^n$ is unknown and the variance parameter $\sigma^2$ is known. For the purpose of discussion, we assume that $\sigma^2=1$ throughout this introduction but our main subsequent results explicitly depend on $\sigma^2$.

This apparently simple model introduced in a notorious paper~\cite{Ste56} by Stein, was the starting point of a vast literature on shrinkage~\cite{Gru98} that later evolved in the Gaussian sequence model. This literature is much too vast to explore here but we refer the reader to the excellent manuscript by Johnstone~\cite{Joh11} for both motivation and partial literature review.

Independently of the variety of methods and results dedicated to the GMM, Nemirovski~\cite{JudNem00,Nem00} introduced \emph{aggregation theory} as a versatile tool for adaptation in nonparametric estimation~\cite{Lec07a,RigTsy07,Yan04}, but also more recently in high dimensional regression \cite{LeuBar06, RigTsy11, DalSal12}. In all these results, exponential weights have played a key role (see \cite{RigTsy12} for a recent survey). Specifically, we focus here on \emph{model selection aggregation} where, given a family of estimators $\hat \mu_1, \dots, \hat \mu_M$, the goal is to mimic the best of them. Originally, aggregation was accompanied  with a sample splitting scheme in which the sample was split into two parts: the first one to construct various estimators and the second to aggregate them. For example, this approach was practically implemented in~\cite{RigTsy07} for density estimation and in \cite{Lec07a} for classification. The advantage of sample splitting is that it allows to \emph{freeze} the first sample and therefore treat the estimators to be aggregated as deterministic functions that only satisfy mild boundedness assumption. This is the framework of \emph{pure aggregation} under which most of the developments have been made starting from the seminal works on aggregation~~\cite{JudNem00,Nem00, Tsy03}. Pure model selection aggregation in the GMM can be described as follows. Given $M \ge 2$ vectors $\mu_1, \ldots, \mu_M$,  the goal is to construct an estimator $\hat \mu$ called \emph{aggregate}, using the observation $Y$ and such that
$$
\|\hat \mu -\mu\|^2 - \min_{1\le j \le M}\|\mu_j -\mu\|^2
$$
is as small as possible, where $\|\cdot\|$ denotes the Euclidean distance on $\R^n$. Bounds on this quantity are called \emph{sharp oracle inequalities}. While not directly connected to Stein's original result on admissibility~\cite{Ste56}, it turns out that for aggregation too, the most natural choice $\hat \mu=\mu_{\hat \jmath}$ where $\hat \jmath=\argmin_{1\le j \le M}\|\mu_j-Y\|^2$ is suboptimal. Nevertheless, this problem is by now well understood and various optimal choices for $\hat \mu$  relying on model averaging rather than model selection were proposed and proved to be optimal (see~\cite{RigTsy12} and references therein). Two approaches have been employed successfully. The first family of methods is based on exponential weights \cite{DalTsy07, DalTsy08}. Following original ideas of Catoni~\cite{Cat99} and Yang~\cite{Yan99}, it can be proved that for any \emph{prior} probability distribution $\pi=(\pi_1, \ldots, \pi_M)$ on $[M]=\{1, \ldots, M\}$, there exists an aggregate $\hat \mu^\mathsf{EW}$ based on exponential weights that satisfies the following \emph{sharp oracle inequality}:
\begin{equation}
\label{EQ:bdEXP}
\E\|\hat \mu^\mathsf{EW} -\mu\|^2 \le \min_{1\le j \le M}\Big\{\|\mu_j -\mu\|^2 + C \log\big(\frac{1}{\pi_j}\big)\Big\}\,,
\end{equation}
where here and in what follows $C>0$ is a numerical constant that may change from line to line. In particular, if $\pi$ is chosen to be the uniform distribution, this estimator attains the optimal rate $C\log(M)$~\cite{RigTsy11} that is independent of the dimension $n$. Nevertheless, it was observed in~\cite{DaiRigZha12} that the random quantity $\|\hat \mu^\mathsf{EW} -\mu\|^2$ may have fluctuation of order $\sqrt{n}$ around its expectation so that the bound~\eqref{EQ:bdEXP} may be fail to accurately describe the risk of $\hat \mu^\mathsf{EW}$, especially for large dimension $n$. To overcome this limitation, a new method called $Q$-aggregation was recently proposed and studied in several settings \cite{Rig12, DaiRigZha12, LecRig13}. It enjoys the following property. For any prior $\pi$ on $[M]$, it yields an aggregate $\hat \mu^Q$ that satisfies not only a sharp oracle inequality \emph{in expectation} of form~\eqref{EQ:bdEXP} but also one that holds \emph{with high probability}:
\begin{equation}
\label{EQ:bdQ}
\|\hat \mu^Q -\mu\|^2 \le \min_{1\le j \le M}\Big\{\|\mu_j -\mu\|^2 + C \log\big(\frac{1}{\delta\pi_j}\big)\Big\}\,,
\end{equation}
with probability $1-\delta$.

In this paper, we extend this work to the aggregation of not fixed vectors $\mu_1, \ldots, \mu_M$ but of affine estimators $\hat \mu_1, \ldots, \hat \mu_M$ that are of the form $\hat \mu_j=A_j Y+b_j$ for some deterministic matrix-vector pair $(A_j, b_j)$. Note that these estimators are constructed using the same observations $Y$ as the ones employed for aggregation. In particular, no sample splitting scheme is needed.

A canonical example of affine estimators where $A_j$ are projection matrices, was first introduced in~\cite{LeuBar06} and further studied in~\cite{RigTsy11} under the light of high-dimensional linear regression. In a remarkable paper, Dalalayan and Salmon~\cite{DalSal12} recently extended these setups to a more general family of affine estimators, under mild conditions on matrices $A_j$. Nevertheless, all these previous papers are limited to deriving sharp oracle inequalities in expectation of the same type as~\eqref{EQ:bdEXP}. Moreover, the lower bounds of \cite{DaiRigZha12} indicate that the estimators based on exponential weights that are employed in~\cite{LeuBar06, RigTsy11, DalSal12} are unlikely to satisfy sharp oracle inequalities with high probability. In this paper, akin to \cite{Rig12, DaiRigZha12, LecRig13}, we demonstrate that $Q$-aggregation succeeds where exponential weights have failed by proving a sharp oracle inequality that holds with high probability in section~\ref{SEC:sharp}. Yet, the situation regarding exponential weights is not desperate as we show in section~\ref{SEC:weak} that it still leads to a weaker notion of oracle inequalities.

The rest of this paper is organized as follows. In the next section, we give a precise description of the problem of \emph{model selection aggregation of affine estimators} and give a solution to this problem using $Q$-aggregation. Specifically, in section~\ref{SEC:sharp}, we show that   for any \emph{prior} probability distribution $\pi=(\pi_1, \ldots, \pi_M)$ on $[M]=\{1, \ldots, M\}$, there exists an aggregate $\hat \mu^{Q}$ based on $Q$-aggregation that satisfies a sharp oracle inequality of the form
\begin{equation}
\label{EQ:bdQ_P}
\|\hat \mu^Q -\mu\|^2 \le \min_{j \in [M]}\Big\{\|\hat \mu_j -\mu\|^2 + C \log\big(\frac{1}{\pi_j}\big)+ 4\sigma^2\tr(A_j)\Big\}\,,
\end{equation}
that holds both in expectation and with high probability, where $\tr(A_j)$ denotes the trace of $A_j$.
We continue by proving  in section~\ref{SEC:weak} that for any $\eps>0$, there exists a choice of the temperature parameter for which the better known aggregate $\hat \mu^\mathsf{EW}$ based on exponential weights  satisfies a \emph{weak oracle inequality} that holds with high probability
\begin{equation}
\label{EQ:bdEXP_P}
\|\hat \mu^\mathsf{EW} -\mu\|^2 \le \min_{j \in [M]}\Big\{(1+\eps)\|\hat \mu_j -\mu\|^2 + \frac{C}{\eps} \log\big(\frac{1}{\pi_j}\big)+ 8\sigma^2\tr(A_j)\Big\}\,.
\end{equation}
 Such an inequality completes the sharp oracle inequality of \cite{DalSal12} that holds in expectation.

We give applications of these oracle inequalities to sparsity pattern aggregation and universal aggregation in section~\ref{SEC:applis}. In particular, we show that $Q$-aggregation of projection estimators leads to the first sharp oracle inequalities \emph{that hold with high probability} for these two problems. By ``high probability", we mean a statement that holds with probability at least $1-\delta$, $0<\delta<1/2$. Our results below exhibit explicit dependence on $\delta$.

\medskip

\noindent{\sc Notation:} For any integer $n$, the set of integers $\{1, \ldots, n\}$ is denoted by $[n]$. We denote by $\tr(A)$ and $\rk(A)$ respectively the  trace and the rank of a square matrix $A$. We denote by $\|\,\cdot\,\|$ the Euclidean norm of $\R^n$ and by $|J|$ the cardinality of a finite set $J$. For any real numbers $a_1, \ldots, a_n$, $\diag(a_1, \ldots, a_n)$ denotes the $n \times n$ diagonal matrix with  $a_1, \ldots, a_n$, on the diagonal. The indicator function is denoted by $\1(\cdot)$ and for any integer $n$, $K \subset [n]$, $\1_K$ denotes the vector  $v\in \{0,1\}^n$ with $j$th coordinate given by $v_j=1$ iff $j \in K$.
For any matrix $B$, $B^\dagger$ denotes the Moore-Penore pseudoinverse of $B$. The operator norm of a matrix is denoted by $\|\cdot\|_{\mathsf{op}}$. The cone of $n\times n$ positive semidefinite matrices is denoted by $\cS_n$. The flat simplex of $\R^M$ is denoted by $\Lambda^M$ and is defined by
$$
\Lambda_M=\Big\{ \theta \in \R^M\,:\, \theta_j \ge 0\,, \sum_{j=1}^M \theta_j=1\Big\}
$$
The set $\Lambda_M$ can be identified to the set of probability measures on $[M]$ and for any $\theta, \pi \in \Lambda_M$, we define the Kullback-Leibler divergence between these two measures by
$$
\cK(\theta, \pi)=\sum_{j=1}^M \theta_j \log\big(\frac{\theta_j}{\pi_j}\big)\,,
$$
with the usual convention that $0\log(0)=0, 0\log(0/0)=0$ and $\theta \log(\theta/0)=+\infty, \ \forall\, \theta>0$. Finally, throughout the paper, we use the notation $\blog(x)$ to denote the function $\blog(x)=(\log x)\vee 1$.

\section{Aggregation of affine estimators}
\setcounter{equation}{0}
Recall that the Gaussian Mean Model (GMM) can be written as follows. One observes $Y \in \R^n$ such that
\begin{equation}
\label{EQ:GMM}
Y =\mu +\xi\,,\qquad \xi \sim \cN(0,I_n)\,.
\end{equation}
Throughout this paper and in accordance with \cite{DalSal12}, we call an affine estimator of $\mu$ any estimator $\hat \mu$ of the form
\begin{equation}
\label{EQ:def_affine}
\hat \mu=AY + b\,,
\end{equation}
where $A \in \cS_n$ is a $n \times n$ matrix and $b \in \R^n$ is a $n$-dimensional vector. Both $A$ and $b$ are deterministic.

Given a family of affine estimators $\hat \mu_1, \ldots, \hat \mu_M$, where $\hat \mu_j=A_jY+b_j$ and a prior probability measure $\pi=(\pi_1, \ldots, \pi_M)$ on these estimators, our goal is to construct an aggregate $\tilde \mu$ such that
\begin{equation}
\label{EQ:OIaffine}
\|\tilde\mu  -\mu\|^2 \le (1+\eps)\|\hat \mu_j -\mu\|^2 + C\big[ \log\big(\frac{1}{\delta\pi_j} \big)+ T_j\big]\,,
\end{equation}
with probability $1-\delta$ for any $j\in [M]$, where $T_j>0$ is as small as possible and $\eps\ge 0$. As we will see, we can achieve $\eps=0$ using $Q$-aggregation  but only prove a weak oracle inequality with $\eps>0$ in section~\ref{SEC:weak} using exponential weights.

Inequalities of the form~\eqref{EQ:OIaffine} with $\eps>0$ can be of interest as long as there exists a candidate affine estimator $\hat\mu_j$ that is close to $\mu$ with high probability. Several examples where it is the case are described in \cite{DalSal12}.

Our results below hold under the following general condition on the family of matrices $\{A_j\}_{j \in [M]}$.
\begin{condition}
\label{cond1}
There exists a finite $V>0$ such that  $\max_{j \in [M]}\|A_j\|_{\mathsf{op}}=V$.
\end{condition}

To illustrate the purpose of aggregating affine estimators and the relevance of Condition~\ref{cond1}, observe that a large body of the literature on the GMM studies estimators of the form $AY$, where $A=\diag(a_1, \ldots, a_n)$ is a diagonal matrix with elements $a_j \in [0,1]$ for all $j=1, \ldots, n$. If $\mu$ is assumed to belong to some family of regularity classes such as Sobolev ellipsoids, Besov classes, tail classes, it has been proved that such estimators are  minimax optimal (see \cite{CavTsy01, Tsy09, Joh11}). Commonly used examples include  ordered projection estimators, spline estimators and Pinsker estimators (see \cite{DalSal12} for a detailed description).
These estimators are known to  be minimax optimal over Sobolev ellipsoids  \cite{Pin80, GolNus92, Tsy09}. Diagonal filters trivially satisfy Condition~\ref{cond1} with $V=1$.

We give details of a specific application to sparsity pattern aggregation and its consequences on universal aggregation in section~\ref{SEC:applis}.

\subsection{Sharp oracle inequalities using $Q$-Aggregation}
\label{SEC:sharp}
In this section, we state our main result: a sharp oracle inequality for an aggregate of affine estimators based on $Q$-aggregation.
Specifically, we consider the problem of aggregating general affine estimators $\hat \mu_j=A_j Y+b_j, j \in [M]$ that satisfy Condition~\ref{cond1}. Note that unlike \cite{DalSal12}, we do not require that matrices $A_j, j \in [M]$  commute and we make no assumption on the vectors $b_j, j=1,\ldots, M$. Moreover, our results can be extended to an infinite family $\{(A_\lambda, b_\lambda), \lambda \in \Lambda\}$ as in \cite{DalSal12} but we prefer to present our result in the discrete case for the sake of clarity.

For any $\theta \in \R^M$, let $\mu_{\theta}$ denote the linear combination of some given affine estimators $\hat \mu_1, \ldots, \hat \mu_M$ that is defined by
$$
\mu_{\theta}=\sum_{j=1}^M \theta_j\hat  \mu_j\,.
$$
Our goal is to find a vector $\hat \theta \in \R^M$ such that the aggregate $\mu_{\hat \theta}$  mimics the affine estimator $\hat \mu_j$ that is the closest to the true mean $\mu$.

In this paper, we consider a generalization of the $Q$-aggregation scheme of static models that was developed in \cite{Rig12, DaiRigZha12}. To that end, fix a prior probability distribution $\pi \in \Lambda_M$ and for any $\theta \in \Lambda_M$, define
\begin{equation}
\label{EQ:defQ}
Q(\theta) = \nu\sum_{j=1}^M\theta_k \|Y-\hat \mu_j\|^{2}+(1-\nu)\|Y-\mu_{\theta}\|^{2}+ \sum_{j=1}^M\theta_j C_j +\lambda\cK(\theta,\pi),
\end{equation}
where $\nu \in (0,1)$ and $\lambda>0$ are tuning parameters, and $C_j$ is set to be
\begin{equation}
\label{EQ:defC_j}
C_j=4\sigma^2\tr(A_j)\,.
\end{equation}
Let now $\hat \theta$ be defined as
\begin{equation}
\label{EQ:defQS}
\hat{\theta}\in\argmin_{\theta\in\Lambda_M} Q(\theta)\,.
\end{equation}
The resulting estimator $\mu_{\hat \theta}$ is called \emph{$Q$-aggregate} estimator of $\mu$.
Theorem \ref{thm:T2} is our main result.
\begin{theorem}\label{thm:T2}
Consider the GMM~\eqref{EQ:GMM} and let $\hat \mu_j=A_jY+b_j, j \in [M]$ be affine estimators of $\mu$ together with a prior distribution $\pi=(\pi_1, \ldots, \pi_M)$ on these estimators and let $V=\max_{j \in [M]}\|A_j\|_{\mathsf{op}}$.
Let $\hat \mu^Q=\mu_{\hat \theta}$ be the $Q$-aggregate estimator with $\hat \theta$ defined in \eqref{EQ:defQS} with tuning parameters $\nu \in (0,1)$ and
$\lambda \ge \frac{8\sigma^2}{\min(\nu, 1-\nu,(\frac{5}{2}V)^{-1})}$. Then for any $\delta>0$, any $j\in [M]$, with probability at least $1-\delta$, we have
\begin{equation}
\label{EQ:OIQP}
\|\hat \mu^Q-\mu\|^2
\leq \min_{j \in [M]}\Big\{\|\hat \mu_j-\mu\|^2+C_j+\lambda\log(\frac{1}{\pi_j\delta})\Big\}\,.
\end{equation}
Moreover, the same $Q$-aggregate estimator $\hat \mu^Q$ satisfies
\begin{equation}
\label{EQ:OIQE}
\E\|\hat \mu^Q-\mu\|^2
\leq \min_{j \in [M]}\Big\{\E\|\hat \mu_j-\mu\|^2+C_j+\lambda\log(\frac{1}{\pi_j})\Big\}.
\end{equation}
\end{theorem}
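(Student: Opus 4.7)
The plan is to (a) derive a first-order inequality from the optimality of $\hat\theta$ as minimizer of $Q$ over $\Lambda_M$, (b) rearrange it to isolate $\|\hat\mu^Q-\mu\|^2$ and identify a stochastic residual $R_j$, and (c) prove the exponential-moment inequality $\E\sum_{j=1}^M\exp(R_j/\lambda)\le 1$; from (c), both \eqref{EQ:OIQP} and \eqref{EQ:OIQE} follow mechanically via Markov's and Jensen's inequalities. For (a), the only nonlinear parts of $Q$ are the quadratic $(1-\nu)\|Y-\mu_\theta\|^2$ and the entropy $\lambda\cK(\theta,\pi)$, whose Bregman divergences at $\hat\theta$ equal $(1-\nu)\|\mu_\theta-\mu_{\hat\theta}\|^2$ and $\lambda\cK(\theta,\hat\theta)$; the KKT condition thus yields
$$Q(\theta)-Q(\hat\theta)\ge(1-\nu)\|\mu_\theta-\mu_{\hat\theta}\|^2+\lambda\cK(\theta,\hat\theta),\qquad\forall\theta\in\Lambda_M.$$

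For (b), instantiate with $\theta=e_j$ (so $\mu_{e_j}=\hat\mu_j$, $Q(e_j)=\|Y-\hat\mu_j\|^2+C_j+\lambda\log(1/\pi_j)$, $\cK(e_j,\hat\theta)=\log(1/\hat\theta_j)$), apply the convex-combination identity $\nu\sum_i\hat\theta_i\|Y-\hat\mu_i\|^2+(1-\nu)\|Y-\mu_{\hat\theta}\|^2=\|Y-\mu_{\hat\theta}\|^2+\nu\sum_i\hat\theta_i\|\hat\mu_i-\mu_{\hat\theta}\|^2$, and expand $\|Y-u\|^2=\|u-\mu\|^2-2\langle\xi,u-\mu\rangle+\|\xi\|^2$ so that the $\|\xi\|^2$ contributions cancel. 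This produces
$$\|\hat\mu^Q-\mu\|^2\le\|\hat\mu_j-\mu\|^2+C_j+\lambda\log(1/\pi_j)+R_j,$$
with
\begin{align*}
R_j = {} & 2\langle\xi,\mu_{\hat\theta}-\hat\mu_j\rangle - \sum_i\hat\theta_iC_i - \lambda\cK(\hat\theta,\pi) \\
& {} - \nu\sum_i\hat\theta_i\|\hat\mu_i-\mu_{\hat\theta}\|^2 - (1-\nu)\|\mu_{\hat\theta}-\hat\mu_j\|^2 - \lambda\log(1/\hat\theta_j).
\end{align*}
Granting (c) for the moment, let $W=\sum_j\exp(R_j/\lambda)$. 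Markov's inequality gives $\p(W>1/\delta)\le\delta\E W\le\delta$, so on the complementary event $\{W\le 1/\delta\}$ (of probability $\ge 1-\delta$) each term satisfies $\exp(R_j/\lambda)\le W\le 1/\delta$, i.e.\ $R_j\le\lambda\log(1/\delta)$; substitution delivers \eqref{EQ:OIQP} uniformly in $j$. For \eqref{EQ:OIQE}, the same hypothesis gives $\E\exp(R_j/\lambda)\le\E W\le 1$ per $j$, whence $\E R_j\le\lambda\log\E\exp(R_j/\lambda)\le 0$ by Jensen.

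The technical crux is therefore (c). Using $\hat\mu_i=A_i(\mu+\xi)+b_i$, write $\mu_{\hat\theta}-\hat\mu_j=v_{\hat\theta,j}+(A_{\hat\theta}-A_j)\xi$ with $A_{\hat\theta}=\sum_i\hat\theta_iA_i$ and deterministic drift $v_{\hat\theta,j}=(A_{\hat\theta}-A_j)\mu+b_{\hat\theta}-b_j$. The stochastic core of $R_j$ then splits into a linear Gaussian $2\langle\xi,v_{\hat\theta,j}\rangle$ and a centered Gaussian chaos $2[\xi^\top A_{\hat\theta}\xi-\sigma^2\tr(A_{\hat\theta})]$, modulo the nonpositive quadratic remainders $-2\sigma^2\tr(A_{\hat\theta})-2\xi^\top A_j\xi\le 0$ that can be dropped since $A_i\in\cS_n$; the prescription $C_j=4\sigma^2\tr(A_j)$ is exactly the calibration that centers the chaos. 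The principal obstacle is that $\hat\theta$ is $\xi$-measurable, ruling out direct Gaussian concentration or Hanson--Wright at $\theta=\hat\theta$. The remedy is a PAC-Bayesian argument rooted in the identity $\exp(R_j/\lambda)=\hat\theta_j\exp(\tilde R_j/\lambda)$, where $\tilde R_j:=R_j+\lambda\log(1/\hat\theta_j)$ removes the $\log\hat\theta_j$ term. For each \emph{deterministic} $\theta\in\Lambda_M$ one controls (i) the linear Gaussian's MGF by $\exp(2s^2\sigma^2\|v_{\theta,j}\|^2)$ and absorbs $\|v_{\theta,j}\|^2$ into the buffers $(1-\nu)\|\mu_\theta-\hat\mu_j\|^2$ and $\nu\sum_i\theta_i\|\hat\mu_i-\mu_\theta\|^2$ via Young's inequality, and (ii) the centered chaos by the $\chi^2$-Laplace bound $-\tfrac12\log\det(I-2tB)\le t\tr(B)+t^2\|B\|_F^2/(1-2t\|B\|_{\mathsf{op}})$ combined with $\|A_\theta\|_{\mathsf{op}}\le V$ (Condition~\ref{cond1}) and $\|A_\theta\|_F^2\le V\tr(A_\theta)$; the three clauses of $\lambda\ge 8\sigma^2/\min(\nu,1-\nu,2/(5V))$ are precisely what make the Young constants and the Laplace radius of convergence mesh. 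The Donsker--Varadhan variational identity, applied against the penalty $-\lambda\cK(\hat\theta,\pi)$ already built into $\tilde R_j$, then converts these pointwise-in-$\theta$ bounds into the uniform statement $\E W\le 1$, closing (c).
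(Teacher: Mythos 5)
Your steps (a)--(b) are correct and amount to a mild variant of the paper's own argument: instead of perturbing along $(1-\beta)\hat\theta+\beta e_j$ and letting $\beta\to 0$, you use the Bregman/strong-convexity inequality at the minimizer of $Q$, and the residual $R_j$ you isolate has exactly the absorption capacity the paper uses, since by the bias--variance identity $\nu\sum_i\hat\theta_i\|\hat\mu_i-\mu_{\hat\theta}\|^2+(1-\nu)\|\mu_{\hat\theta}-\hat\mu_j\|^2\ge\min(\nu,1-\nu)\sum_i\hat\theta_i\|\hat\mu_i-\hat\mu_j\|^2$. The per-$k$ moment-generating-function estimates you sketch (Cauchy--Schwarz split, $\chi^2$-Laplace bound, $\|A\|_{\mathsf{op}}\le V$, with $C_k=4\sigma^2\tr(A_k)$ as centering) mirror the paper's Lemma A.1 and use the stated threshold on $\lambda$ in the same way.

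The genuine gap is claim (c), $\E\sum_{j\in[M]}\exp(R_j/\lambda)\le 1$. The Donsker--Varadhan/PAC-Bayes device acts on the mixture index $k$: after lower-bounding the buffers (which are \emph{not} linear in $\hat\theta$, so your ``pointwise in deterministic $\theta$'' MGF control does not combine with DV as stated) by the linear quantity $\min(\nu,1-\nu)\sum_k\hat\theta_k\|\hat\mu_k-\hat\mu_j\|^2$, the exponent $\tilde R_j/\lambda$ is at most $\sum_k\hat\theta_k g_{k,j}(\xi)-\cK(\hat\theta,\pi)$ with $g_{k,j}$ free of $\hat\theta$, and DV gives $\exp(\tilde R_j/\lambda)\le\sum_k\pi_k e^{g_{k,j}}$, hence $\E\exp(\tilde R_j/\lambda)\le 1$ \emph{for each fixed $j$}. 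This only yields $\E\exp(R_j/\lambda)\le 1$ per $j$, i.e.\ $\E W\le M$: nothing in the argument supplies a factor $\pi_j$ (or any summable weight) to pay for the sum over $j$. The factors $\hat\theta_j$ in $\exp(R_j/\lambda)=\hat\theta_j\exp(\tilde R_j/\lambda)$ are random and correlated with the exponentials, so they cannot be integrated out, and a DV step over $j$ is unavailable because $\lambda\log(1/\pi_j)$ has already been moved to the right-hand side of the oracle inequality. With $\E W\le M$, your Markov step produces an extra additive $\lambda\log M$ absent from \eqref{EQ:OIQP}, and the bound becomes wrong for non-uniform priors. The repair is the paper's route: keep the per-$j$ bound $\E\exp(R_j/\lambda)\le 1$, apply the Chernoff bound at level $\pi_j\delta$ for each $j$, and conclude by a union bound ($\sum_j\pi_j\delta=\delta$); the remainder $\lambda\log(1/(\pi_j\delta))$ is calibrated exactly for this. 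The in-expectation statement \eqref{EQ:OIQE} then follows per $j$ from $\E\exp(R_j/\lambda)\le 1$ and Jensen, as you indicate.
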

A few remarks are in order. Note that the oracle inequality of Theorem~\ref{thm:T2} is \emph{sharp} since the leading term $\|\hat \mu_j-\mu\|^2$ has multiplicative constant $1$. A similar oracle inequality was obtained in \cite{DalSal12} bit our main theorem above presents significant differences. First, and this is the main contribution of this paper, the above oracle inequality holds with high probability whereas the ones in \cite{DalSal12} only hold in expectation. Nevertheless, our model is simpler than the one studied in \cite{DalSal12} who study heteroskedastic regression. Moreover, the bound in \cite[Theorem~2]{DalSal12} is ``scale-free" whereas ours depends critically on the size of the matrices $A_j$ via $C_j$ and $V$. We believe that this dependence cannot be avoided in high probability bounds as such quantities essentially control the deviations of estimators. As we will see, the bounds of Theorem~\ref{thm:T2} are sufficient to perform  sparsity pattern aggregation and universal aggregation optimally.

%
%

\subsection{Weak oracle inequality using exponential weights}
\label{SEC:weak}

The oracle inequalities~\eqref{EQ:bdEXP} and~\eqref{EQ:bdQ} are \emph{sharp} in contrast to \emph{weak} oracle inequalities where the right-hand side of~\eqref{EQ:bdEXP} or~\eqref{EQ:bdQ} is replaced by
$$
\min_{1\le j \le M}\{(1+\eps)\|\mu_j -\mu\|^2 + C_j+\frac{C}{\eps} \log\big(\frac{1}{\delta\pi_j}\big)\}\,,
$$
for some $\eps>0$ (see \cite{LecMen12a} for a discussion on the difference between sharp and weak oracle inequalities). While they appear to be quite similar, some estimators do satisfy weak oracle inequalities while they do not satisfy sharp ones. This is the case of the aggregate with exponential weight that provably fails to satisfy a sharp oracle inequality with high probability in a certain setups~\cite[Proposition~2.1]{DaiRigZha12}.


To prove weak oracle inequalities that hold with high probability, we  modify the aggregate studied in \cite{DalSal12}. Recall that $\hat \mu_j=A_jY+b_j, j\in [M]$ is a family of affine estimators equipped with a prior probability distribution $\pi \in \Lambda_M$ and that $C_j$ is defined in~\eqref{EQ:defC_j}. Let $\hat \theta \in \Lambda_M$ be the vector of exponential weights defined by
\begin{equation}
\label{EQ:defEXP}
\hat{\theta}_j \propto  \pi_j \exp\big(-\frac{\|Y - \hat \mu_j\|^2 +C_j}{\lambda}\big)\,, \quad j \in [M]\,.
\end{equation}
The parameter $\lambda>0$ is often referred to as \emph{temperature parameter}. It is not hard to show (see, e.g., \cite[p. 160]{Cat04}) that $\hat{\theta}$ is the solution of the following optimization problem:
\begin{equation}
\label{EQ:defEXP2}
\hat{\theta}\in\argmin_{\theta\in\Lambda}\Big\{\sum_{j=1}^M\theta_j\|Y-\hat \mu_j\|^2+\sum_{j=1}^M\theta_j C_j+\lambda\mathcal{K}(\theta,\pi)\Big\}\,.
\end{equation}
Observe that the above criterion corresponds to $Q$ defined in~\eqref{EQ:defQ} with $\nu=1$, that is without the quadratic term in $\theta$. We believe that this quadratic term is key in obtaining sharp oracle inequalities that hold with high probability. We already know from previous work \cite{LeuBar06, RigTsy11, RigTsy12, DalSal12} that this term is not necessary to obtain sharp oracle inequalities that hold in expectation. As illustrated  below, it is also not required to get weak oracle inequalities, even with high probability.

Denote by $\hat \mu^{\mathsf{EW}}=\sum_{j=1}^M \hat \theta_j \hat \mu_j$ the aggregate with exponential weights $\hat \theta_j\,, j \in [M]$ defined in~\eqref{EQ:defEXP}.
\begin{theorem}\label{thm:T1}
Let the conditions of Theorem~\ref{thm:T2} hold. Let $\hat \mu^{\mathsf{EW}}=\mu_{\hat \theta}$ be the aggregate  with exponential weights $\hat \theta$ defined in \eqref{EQ:defEXP} with tuning parameter
$\lambda \geq  4\sigma^2(16\vee 5V)$. Then for any $\delta>0$, with probability at least $1-\delta$, we have
\[
\|\mu-\hat \mu^{\mathsf{EW}}\|^2 \leq \min_{j \in [M]}\Big\{\big(1+\frac{128\sigma^2}{3\lambda}\big)\|\mu-\hat\mu_j\|^2+3\lambda\log\big(\frac{1}{\delta\pi_j}\big)+2 C_j\Big\}.
\]
\end{theorem}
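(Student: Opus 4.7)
The starting point is the variational identity~\eqref{EQ:defEXP2}: plugging the point mass $\theta=e_k$ into the minimization and combining with the convexity bound $\|Y-\hat\mu^{\mathsf{EW}}\|^2 \leq \sum_j \hat\theta_j\|Y-\hat\mu_j\|^2$ yields
\[
\|Y-\hat\mu^{\mathsf{EW}}\|^2 + \sum_j\hat\theta_j C_j + \lambda\mathcal{K}(\hat\theta,\pi) \leq \|Y-\hat\mu_k\|^2 + C_k + \lambda\log(1/\pi_k).
\]
Expanding both squared norms via $\|Y-v\|^2 = \|v-\mu\|^2 - 2\langle\xi,v-\mu\rangle + \|\xi\|^2$ and rearranging produces the deterministic inequality
\[
\|\hat\mu^{\mathsf{EW}}-\mu\|^2 \leq \|\hat\mu_k-\mu\|^2 + 2\langle\xi,\hat\mu^{\mathsf{EW}}-\hat\mu_k\rangle + \Big(C_k - \sum_j\hat\theta_j C_j\Big) + \lambda\log(1/\pi_k) - \lambda\mathcal{K}(\hat\theta,\pi),
\]
and the rest of the argument is devoted to controlling the data-dependent cross term.

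Writing $\hat\mu_j-\mu = A_j\xi + m_j$ with bias $m_j := A_j\mu + b_j - \mu$, this cross term splits into a $\hat\theta$-weighted quadratic form $2\sum_j\hat\theta_j\,\xi^T A_j\xi$, a $\hat\theta$-weighted linear form $2\sum_j\hat\theta_j\langle\xi,m_j\rangle$, and two deterministic deviations at index $k$. I plan to bound each $\hat\theta$-weighted piece by a Donsker--Varadhan (PAC-Bayesian) step
\[
\sum_j\hat\theta_j X_j \leq \eta\log\Big(\sum_j\pi_j e^{X_j/\eta}\Big) + \eta\mathcal{K}(\hat\theta,\pi),
\]
followed by Markov's inequality, which gives $\sum_j\hat\theta_j X_j \leq \eta\mathcal{K}(\hat\theta,\pi) + \eta\log(1/\delta)$ with probability $1-\delta$ as soon as $\sum_j\pi_j\mathbb{E}[e^{X_j/\eta}]\leq 1$. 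I apply this with $X_j = 2\xi^T A_j\xi - C_j$, using the Gaussian-chaos moment generating function $\mathbb{E}\exp(s\xi^T A_j\xi)=\prod_i(1-2s\sigma^2 a_{ji})^{-1/2}$ together with $\|A_j\|_{\mathsf{op}}\leq V$, $\tr(A_j^2)\leq V\tr(A_j)$, and the choice $C_j = 4\sigma^2\tr(A_j)$, which forces $\eta$ to exceed a fixed multiple of $\sigma^2 V$; and with $X_j = 2\langle\xi,m_j\rangle - \eps\|m_j\|^2$, using the Gaussian MGF $\mathbb{E}\exp(t\langle\xi,m_j\rangle) = \exp(t^2\sigma^2\|m_j\|^2/2)$, which forces $\eta\eps$ to exceed a multiple of $\sigma^2$. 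The two deterministic deviations at index $k$, namely $2\xi^T A_k\xi - C_k$ and $2\langle\xi,m_k\rangle$, are handled directly by Laurent--Massart and a Gaussian tail bound respectively, since $k$ is non-random.

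The hard part is the bookkeeping. Both PAC-Bayes applications contribute an $\eta\mathcal{K}(\hat\theta,\pi)$, and their total must be dominated by the $-\lambda\mathcal{K}(\hat\theta,\pi)$ already present; this forces $\lambda$ to exceed a specific multiple of $\sigma^2 V$, precisely the role of the hypothesis $\lambda\geq 4\sigma^2(16\vee 5V)$. The linear-part bound moreover leaves a remainder $\eps\sum_j\hat\theta_j\|m_j\|^2$ that must be converted into $\eps\|\hat\mu_k-\mu\|^2$ on the right-hand side: I would dominate $\|m_j\|^2 \leq 2\|\hat\mu_j-\mu\|^2 + 2\|A_j\xi\|^2$ and then re-invoke the basic variational inequality (with enlarged constants) to pass from $\sum_j\hat\theta_j\|\hat\mu_j-\mu\|^2$ back to $\|\hat\mu_k-\mu\|^2$ at the cost of the claimed $(1+\eps)$ multiplicative factor. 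Choosing $\eps = 128\sigma^2/(3\lambda)$ and tracking the numerical constants produces the advertised $3\lambda\log(1/(\delta\pi_j))$ coefficient---one $\lambda$ from the initial variational inequality and two more from the two Gibbs+Markov steps (at the level of $\hat\theta$ and at index $k$)---and the additive $2C_j$, coming from one $C_j$ in the variational inequality and one from the concentration of $2\xi^T A_k\xi$.
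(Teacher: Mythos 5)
Your overall strategy---starting from the variational characterization \eqref{EQ:defEXP2} and controlling the stochastic term by a Donsker--Varadhan/Chernoff argument in which the $\lambda\cK(\hat\theta,\pi)$ of the criterion absorbs the PAC-Bayesian KL---is the same one the paper uses (its Lemma~\ref{LEM:main} is exactly such a deviation bound). The gap is in your conversion step. You compensate the weighted linear term $2\sum_j\hat\theta_j\langle\xi,m_j\rangle$ only by the deterministic biases $\eps\sum_j\hat\theta_j\|m_j\|^2$, and then propose to turn this into $\eps\|\hat\mu_k-\mu\|^2$ via $\|m_j\|^2\le 2\|\hat\mu_j-\mu\|^2+2\|A_j\xi\|^2$ plus a re-invocation of the variational inequality. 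The substitution creates the weighted chaos $2\eps\sum_j\hat\theta_j\|A_j\xi\|^2$, whose natural compensation is of order $\eps\sigma^2\tr(A_j^2)$, i.e.\ up to $\eps V C_j/4$ per index. Under the stated hypothesis you only have $\eps V=\frac{128\sigma^2V}{3\lambda}\le\frac{32}{15}$, so this remainder is of the same order as $\sum_j\hat\theta_j C_j$; it cannot be bounded by $2C_k$ (the random weights may charge estimators whose traces are far larger than $\tr(A_k)$), and the single negative $-\sum_j\hat\theta_j C_j$ supplied by \eqref{EQ:defEXP2} has already been spent compensating $2\sum_j\hat\theta_j\xi^\top A_j\xi$, whose mean alone is $\frac12\sum_j\hat\theta_j C_j$. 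Moreover, re-invoking the variational inequality to pass from $\sum_j\hat\theta_j\|\hat\mu_j-\mu\|^2$ back to $\|\hat\mu_k-\mu\|^2$ reintroduces exactly the cross terms you are trying to control; the resulting bootstrap only closes if the coefficients of these leftover terms are strictly below one, which fails for the $V$-scaled piece under $\lambda\ge 20\sigma^2V$. So, as written, the bookkeeping does not close with the stated condition on $\lambda$, let alone with the advertised constants.

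The paper avoids this trap through two choices you gave up at the outset. First, it does not apply Jensen to the quadratic criterion: it keeps the exact identity $\sum_j\hat\theta_j\|\mu-\hat\mu_j\|^2=\|\mu-\hat\mu^{\mathsf{EW}}\|^2+\sum_j\hat\theta_j\|\hat\mu_j-\hat\mu^{\mathsf{EW}}\|^2$, so a negative variance term survives in the basic inequality. Second, in Lemma~\ref{LEM:main} the compensation is the full random quantity $\frac{8\sigma^2}{\lambda}\sum_j\hat\theta_j\|\hat\mu_j-\hat\mu_k\|^2$ (writing $\hat\mu_j-\hat\mu_k=B_j\xi+v_j$ and handling the correlation with $\xi$ by Cauchy--Schwarz inside the moment generating function), not merely the deterministic part $\|v_j\|^2$. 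Splitting that compensation with the same identity, the $\sum_j\hat\theta_j\|\hat\mu_j-\hat\mu^{\mathsf{EW}}\|^2$ portion is absorbed by the retained negative term (this is where $\lambda\ge16\sigma^2$ enters), and only $\frac{8\sigma^2}{\lambda}\|\hat\mu^{\mathsf{EW}}-\hat\mu_k\|^2\le\frac{16\sigma^2}{\lambda}\big(\|\mu-\hat\mu^{\mathsf{EW}}\|^2+\|\mu-\hat\mu_k\|^2\big)$ remains; moving it to the left and using $(1-x)^{-1}\le 1+\frac43 x$ for $\lambda\ge 64\sigma^2$ produces the factor $1+\frac{128\sigma^2}{3\lambda}$, the $3\lambda\log\frac{1}{\delta\pi_j}$ and the $2C_j$, after a union bound run at confidence levels $\delta\pi_j$. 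To salvage your route you would have to make the compensation for the $\hat\theta$-weighted terms the random distances $\|\hat\mu_j-\hat\mu_k\|^2$ rather than the biases $\|m_j\|^2$, and keep the exact variance decomposition instead of Jensen---at which point you have reconstructed the paper's proof.
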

Note that unlike Theorem~\ref{thm:T2}, the right-hand side of the above oracle inequality is multiplied by a factor $1+\varepsilon>1$: it is a weak oracle inequality but it holds with high probability and thus complements the results of~\cite{DalSal12} on aggregation of affine estimators using exponential weights. Alquier and Lounci \cite{AlqLou11} prove  the first oracle inequality with high probably using exponential weights. They use specific projection estimators  $\hat \mu_j$ for sparsity pattern aggregation but make extra assumptions and use a prior probability measure tailored to these assumptions in order to obtain a sharp oracle inequality. While their final result \cite[Theorem~3.1]{AlqLou11} is not directly comparable to ours, a weak oracle inequality similar to the one above can be deduced from their proof. Actually, our proof uses one of their arguments.

\section{Sparsity pattern aggregation}
\label{SEC:applis}
\setcounter{equation}{0}
In this section, we  illustrate the power of the two oracle inequalities stated in the previous section. Indeed, carefully selecting the affine estimators $\hat \mu_1, \ldots, \hat \mu_M$, as well as the the prior probability distribution $\pi$ leads to various optimal results. Some results for diagonal filters can be found in \cite{DalSal12} and we focus here on sparsity pattern aggregation.

Recall the results we have proved in the previous section. With probability at least $1-\delta$, for $\lambda$ large enough and,
$$
\|\mu_{\hat \theta}-\mu\|^2
\leq  \min_{j \in [M]}\Big\{\big(1+\frac{t}{\lambda}\big)\|\hat \mu_j-\mu\|^2+C_j+\lambda\log(\frac{1}{\pi_j\delta})\Big\}\,.
$$
where $t=0$ if $\hat \theta$ is computed according to~\eqref{EQ:defQS} and $t=4\sigma^2$ if $\hat \theta$ is computed according to~\eqref{EQ:defEXP}.

In the sequel, we fix $\nu=1/2$ in $Q$-aggregation since this choice leads to the sharpest bounds.
\subsection{Sparsity pattern aggregation}
\label{SEC:sparse}

Let $X_1, \ldots, X_p \in \R^n$ be given vectors and assume that $\mu \in \R^n$ can be well approximated by a  linear combination of $X_j, j \in J^*$ for some unknown sparsity pattern $J^*\subset [p]$. More precisely, we are interested in \emph{sparse} linear regression, where the goal is to find a sparse $\beta \in \R^p$ such that $\|\X\beta-\mu\|^2$ is small, where $\X =[X_1, \ldots, X_p]$ is the $n \times p$ design matrix obtained by concatenating the $X_j$'s. Akin to \cite{BicRitTsy09, RigTsy11}, we do not assume that there exists a sparse $\beta^*$ such that $\X\beta^*=\mu$ but rather that there may be a systematic error. Oracle inequalities such as the ones described below in Theorems~\ref{TH:spa} and~\ref{TH:spaellq} capture the statistical precision of fitting possibly misspecified sparse linear models in the GMM.

To achieve our goal, we follow the same idea as in \cite{RigTsy11, RigTsy12} and employ \emph{sparsity pattern aggregation}. The idea can be summarized as follows. For each sparsity pattern of $\beta$, compute the least squares estimator and then aggregate these (projection) estimators. Specifically, for each sparsity pattern $J \subset [p]$  define $\X_J$ to be the $n \times |J|$ matrix obtained by concatenating $X_j, j \in J$ and let $A_J=\X_J(\X_J^\top \X_J)^\dagger \X^\top_J$ denote the projection matrix onto the linear span $\linspan(\X_J)$ of $X_j, j \in J$.  If $J^*$ was known, a good candidate to estimate $\mu$ would be the least squares estimator $\hat \mu_{J^*}=A_{J^*} Y$. Since $J^*$ is unknown, we propose to aggregate the affine (actually linear) estimators $\hat \mu_{J}=A_J Y, J\subset [p]$. This approach is called \emph{sparsity pattern aggregation} \cite{RigTsy11} and can be extended to more general notions of sparsity such as group sparsity or fused sparsity  \cite{RigTsy12}. It yields a family of affine estimators $\hat \mu_J=A_JY$ such that $C_J=4\sigma^2$,$\tr(A_J)=4\sigma^2\rk(\X_J)$ and $V=\max_J\|A_J\|_\mathsf{op}=1$.

Sparsity pattern aggregation has been shown to attain the best available sharp oracle inequalities in expectation \cite{RigTsy11, RigTsy12} and one of the main contribution of this paper is to extend these results to results with high probability. Moreover, it leads to universal aggregation with high probability (see section~\ref{sec:univ}).

The key to sparsity pattern aggregation is to employ a correct prior probability distribution. Rigollet and Tsybakov~\cite{RigTsy12}, following \cite{LeuBar06, Gir08} suggest to use
\begin{equation}
\label{EQ:priorSPA}
\pi_J\propto \frac{e^{-|J|}}{{p \choose |J|}}\,.
\end{equation}
In particular, it exponentially downweights patterns $J$ according to their cardinality.

For any $\beta \in \R^p\setminus\{0\}$, let $|\beta|_0$ denote the number of nonzero coefficients of $\beta$ and, by convention, let $|0|_0=1$.
\begin{theorem}
\label{TH:spa}
Let $\hat \mu_J, J \subset [p]$ be the least squares estimator defined as above, let $\pi$ be the sparsity prior defined in~\eqref{EQ:priorSPA} and fix $\delta>0$. Then the following statements hold:
\begin{itemize}
\item[(i)] For $\lambda\geq 20\sigma^2$, with probability at least $1-\delta$, the $Q$-aggregate estimator $\hat \mu^Q$ satisfies
\begin{equation}
\label{EQ:soiQ}
\|\hat \mu^Q-\mu\|^2 \le \min_{\beta \in \R^p}\Big\{\|\X\beta-\mu\|^2+  3(\lambda+2\sigma^2)|\beta|_0\log\big(\frac{2ep}{|\beta|_0\delta}\big)\Big\}.
\end{equation}
\item[(ii)] For $\lambda\geq 64\sigma^2$, with probability at least $1-\delta$, the aggregate with exponential weights $\hat \mu^{\mathsf{EW}}$ satisfies
\begin{equation}
\label{EQ:soiEXP}
\|\hat \mu^\mathsf{EW}-\mu\|^2 \le \min_{\beta \in \R^p}\Big\{\big(1+\frac{128\sigma^2}{3\lambda}\big)\|\X\beta-\mu\|^2+   6(\lambda+2\sigma^2)|\beta|_0\log\big(\frac{2ep}{|\beta|_0\delta}\big)\Big\}.
\end{equation}
\end{itemize}
\end{theorem}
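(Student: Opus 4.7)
The plan is to apply Theorems~\ref{thm:T2} and~\ref{thm:T1} to the family of projection estimators $\hat\mu_J = A_J Y$, $J \subseteq [p]$, equipped with the sparsity prior~\eqref{EQ:priorSPA}. Each $A_J$ is an orthogonal projection onto $\linspan(\X_J)$, so Condition~\ref{cond1} holds with $V=1$, and $C_J = 4\sigma^2\tr(A_J) = 4\sigma^2\rk(\X_J)$. Choosing $\nu = 1/2$, the lower bounds $\lambda \ge 20\sigma^2$ and $\lambda \ge 64\sigma^2$ in parts (i) and (ii) match exactly the hypotheses of Theorems~\ref{thm:T2} and~\ref{thm:T1}, respectively, so the abstract oracle inequalities hold at confidence level $1-\delta/2$.

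The next step is to replace the random oracle quantity $\|\hat\mu_J - \mu\|^2$ appearing on the right-hand side by the deterministic misspecification error $\|\X\beta - \mu\|^2$ for any $\beta \in \R^p$ whose support is contained in $J$. Since $\X\beta \in \linspan(\X_J)$, the decomposition $\hat\mu_J - \mu = (A_J\mu - \mu) + A_J\xi$ together with the orthogonality of its two summands (the first lies in $\linspan(\X_J)^\perp$, the second in $\linspan(\X_J)$) gives
\[
\|\hat\mu_J - \mu\|^2 = \|A_J\mu - \mu\|^2 + \|A_J\xi\|^2 \le \|\X\beta - \mu\|^2 + \|A_J\xi\|^2,
\]
where the inequality uses that $A_J\mu$ is the closest element of $\linspan(\X_J)$ to $\mu$. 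Because $\|A_J\xi\|^2/\sigma^2 \sim \chi^2_{\rk(A_J)}$, the Laurent--Massart tail bound, combined with $2\sqrt{d t} \le d+t$, yields
\[
\rP\!\left(\|A_J\xi\|^2 > \sigma^2\bigl(2\rk(A_J) + 3\log(2/(\pi_J\delta))\bigr)\right) \le \tfrac{1}{2}\pi_J\delta,
\]
and a $\pi_J$-weighted union bound (valid since $\sum_J \pi_J = 1$) ensures that this chi-squared control holds for all $J$ simultaneously on an event of probability at least $1-\delta/2$.

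Intersecting this event with the one supplied by Theorems~\ref{thm:T2} or~\ref{thm:T1}, and specializing to $J = \{i:\beta_i\neq 0\}$, yields with probability $1-\delta$ a bound of the shape
\[
(1+\varepsilon)\|\X\beta-\mu\|^2 + 6\sigma^2 \rk(A_J) + (\lambda + 3\sigma^2)\log(2/(\pi_J\delta)),
\]
with $\varepsilon = 0$ in the $Q$-aggregation case and $\varepsilon = 128\sigma^2/(3\lambda)$ for exponential weights (the slightly larger coefficients in part (ii) come from the extra factor $2$ in front of $C_J$ in Theorem~\ref{thm:T1}). It remains to rewrite the prior term in the announced entropy form: using $\binom{p}{k} \le (ep/k)^k$ and the fact that the normalizing constant in~\eqref{EQ:priorSPA} satisfies $\sum_{k=0}^{p} e^{-k} \le 1/(1-e^{-1})$, one obtains $\log(1/\pi_J) \le |\beta|_0\log(ep/|\beta|_0) + |\beta|_0 + O(1)$, so $\log(2/(\pi_J\delta)) \le |\beta|_0\log(2ep/(|\beta|_0\delta)) + O(1)$. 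Combining with $\rk(A_J) \le |\beta|_0$ and absorbing the constant-order terms into the leading coefficient (which is where the constraints $\lambda \ge 20\sigma^2$ and $\lambda \ge 64\sigma^2$ are used to buy slack) produces the multiplicative factors $3$ and $6$ in~\eqref{EQ:soiQ} and~\eqref{EQ:soiEXP}.

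The main obstacle is controlling the chi-squared deviations of $\|A_J\xi\|^2$ uniformly over the $2^p$ sparsity patterns. This is precisely where the exponential downweighting of large supports in the prior~\eqref{EQ:priorSPA} is essential: the factor $e^{-|J|}$ exactly compensates for the $\log\binom{p}{|J|}$ cost of a union bound, producing the minimax-optimal penalty $|\beta|_0 \log(ep/|\beta|_0)$ in place of the naive $|\beta|_0 \log p$.
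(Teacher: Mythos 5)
Your proposal is correct and follows essentially the same route as the paper: apply Theorems~\ref{thm:T2} and~\ref{thm:T1} (with $\nu=1/2$, $V=1$, $C_J=4\sigma^2\rk(\X_J)$) at confidence $1-\delta/2$, use the Pythagorean decomposition $\|\hat\mu_J-\mu\|^2=\|A_J\mu-\mu\|^2+\|A_J\xi\|^2$ with $\|A_J\mu-\mu\|^2\le\|\X\beta-\mu\|^2$, control $\|A_J\xi\|^2$ by the Laurent--Massart bound (Lemma~\ref{LEM:LM}) together with $2\sqrt{ab}\le a+b$, bound $\log(1/\pi_J)$ for the prior~\eqref{EQ:priorSPA}, and absorb constants. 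The one place you diverge is the chi-squared step: the paper observes that the minimizer $\bar\beta$ of the deterministic right-hand side of~\eqref{EQ:soiQ}, hence its support $\bar J$, is nonrandom, so the tail bound is needed only for that single pattern, at cost $3\sigma^2\log(2/\delta)$; you instead run a $\pi$-weighted union bound over all $2^p$ patterns, paying an extra $3\sigma^2\log(1/\pi_J)$, which the slack $\lambda\ge 20\sigma^2$ (resp.\ $64\sigma^2$) indeed absorbs, so your constants still close. Consequently your closing claim that uniform control of $\|A_J\xi\|^2$ is ``the main obstacle'' and that the prior is essential there is not accurate for this theorem: the prior's compensation of the combinatorial cost enters only through the $\lambda\log(1/\pi_J)$ term already present in the aggregation theorems, and no uniformity over patterns is required. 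Two minor bookkeeping points: your inequality $\log(2/(\pi_J\delta))\le|\beta|_0\log(2ep/(|\beta|_0\delta))+O(1)$ drops an additive term of order $|\beta|_0$ coming from the factor $e^{-|J|}$ in the prior (the paper keeps it via $\log(\pi_J^{-1})\le 2|J|\log(ep/|J|)+\tfrac12$), and in part (ii) the factor $1+128\sigma^2/(3\lambda)\le 5/3$ also multiplies the chi-squared and rank terms; both are harmless given the room in the constants $3(\lambda+2\sigma^2)$ and $6(\lambda+2\sigma^2)$, but should be tracked explicitly.
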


\begin{corollary} Taking $\lambda=20\sigma^2$ and $\lambda= 64\sigma^2$ for  $\hat \mu^Q$ and $\hat \mu^\mathsf{EW}$ respectively, with probability at least $1-\delta$, we have:
\begin{equation}
\label{EQ:soiQC}
\|\hat \mu^Q-\mu\|^2 \le \min_{\beta \in \R^p}\big\{\|\X\beta-\mu\|^2+  66\sigma^2|\beta|_0\log\big(\frac{2ep}{|\beta|_0\delta}\big)\big\},
\end{equation}
and
\begin{equation}
\label{EQ:soiEXPC}
\|\hat \mu^\mathsf{EW}-\mu\|^2 \le \min_{\beta \in \R^p}\big\{\frac53\|\X\beta-\mu\|^2+  396\sigma^2|\beta|_0\log\big(\frac{2ep}{|\beta|_0\delta}\big)\big\}.
\end{equation}
\end{corollary}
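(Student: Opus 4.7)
The plan is to obtain both displays as direct numerical instantiations of Theorem~\ref{TH:spa}, with $\lambda$ chosen to balance the multiplicative factor on $\|\X\beta-\mu\|^2$ against the linear coefficient multiplying the complexity term. First I would verify that the stipulated values $\lambda=20\sigma^2$ and $\lambda=64\sigma^2$ meet the admissibility constraints $\lambda\ge 20\sigma^2$ and $\lambda\ge 64\sigma^2$ imposed in parts (i) and (ii) of Theorem~\ref{TH:spa} respectively; since these hold with equality, the high-probability inequalities~\eqref{EQ:soiQ} and~\eqref{EQ:soiEXP} are available and it only remains to simplify the resulting constants.

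For~\eqref{EQ:soiQC}, I would substitute $\lambda=20\sigma^2$ into~\eqref{EQ:soiQ}. The leading factor on $\|\X\beta-\mu\|^2$ is already $1$ in~\eqref{EQ:soiQ}, while the penalty coefficient becomes $3(\lambda+2\sigma^2)=3\cdot 22\sigma^2=66\sigma^2$, which yields the claimed bound on a single line.

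For~\eqref{EQ:soiEXPC}, I would substitute $\lambda=64\sigma^2$ into~\eqref{EQ:soiEXP}. The multiplicative factor reads $1+128\sigma^2/(3\lambda)=1+2/3=5/3$, and the penalty coefficient becomes $6(\lambda+2\sigma^2)=6\cdot 66\sigma^2=396\sigma^2$, delivering the weak oracle inequality exactly as stated.

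There is no genuine obstacle: the corollary is a routine plug-in of two clean numerical values into Theorem~\ref{TH:spa}. The only design question concerns the choice of $\lambda$ itself. For the $Q$-aggregate, because the leading factor is $1$ irrespective of $\lambda$, one picks $\lambda$ as small as admissibility allows so as to minimize the penalty; for the $\mathsf{EW}$ aggregate the trade-off between the factor $1+128\sigma^2/(3\lambda)$ and the coefficient $6(\lambda+2\sigma^2)$ is genuine, and $\lambda=64\sigma^2$ is simply a convenient choice that produces the clean factor $5/3$ together with a moderate penalty constant.
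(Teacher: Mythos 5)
Your proposal is correct and is exactly the intended argument: the corollary is an immediate plug-in of $\lambda=20\sigma^2$ and $\lambda=64\sigma^2$ into parts (i) and (ii) of Theorem~\ref{TH:spa}, and your constant computations $3(20\sigma^2+2\sigma^2)=66\sigma^2$, $1+128/(3\cdot 64)=5/3$, and $6(64\sigma^2+2\sigma^2)=396\sigma^2$ all check out. The paper offers no separate proof precisely because this substitution is the whole content, so there is nothing to add.
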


The novelty of this result is twofold. First, we use $Q$-aggregation to obtain the first sharp sparsity oracle inequalities that hold with high probability under no additional condition on the problem. Second, we prove a weak sparsity oracle inequality for the aggregate based on exponential weights that holds with high probability. While it is only a weak oracle inequality, it extends the results of Rigollet and Tsybakov \cite{RigTsy11, RigTsy12} that hold only in expectation and the results of \cite{AlqLou11} that hold with high probability but under additional conditions.
%
%
%
%
%
%
%
\subsection{$\ell_q$-aggregation}

Recently, Rigollet and Tsybakov \cite{RigTsy11} observed that any estimator that satisfies an oracle inequality such as~\eqref{EQ:OIQE} also adapts to sparsity when measured in terms of $\ell_1$ norm. Specifically, their result \cite[Lemma~A.2]{RigTsy11} implies that if $\max_j \|\mu_j\|\le \sqrt{n}$, then for any constant $\nu>0$, it holds
\begin{equation}
\label{EQ:lemA2}
\min_{\theta \in \R^M} \Big\{ \|\mu_\theta -\mu\|^2 + \nu^2|\theta|_0\log\big(1+\frac{eM}{|\theta|_0}\big)\Big\}\le \min_{\theta \in \sB_1(1)}  \|\mu_\theta -\mu\|^2 +\bar c \nu \sqrt{n \log\big(1+\frac{eM\nu}{\sqrt{n}}\big)}\,,
\end{equation}
where $\bar c$ is an absolute constant. The above bound hinges on a Maurey argument, which, as noticed by Wang {\it et al.} \cite{WanPatGao11}, can be  extended from  $\ell_1$ balls to $\ell_q$ balls for $q \in (0,1]$. It has been argued that $\ell_q$-balls ($0<q\leq1$) describe vectors that are ``almost sparse'' \cite{FouPajRau10, Joh11}.

For any $q \in (0,1], \theta \in \R^M$, let $|\theta|_q$ denote the $\ell_q$-``norm" of $\R^M$ of $\theta$ defined  by
$$
|\theta|_q=\Big( \sum_{j \in [M]} |\theta_j|^q\Big)^\frac1q\,.
$$
Moreover, for a given radius $R>0$ and any $q \in [0,1]$, define the  $\ell_q$-ball of radius $R$ by
$$
\sB_q(R)=\big\{\theta \in \R^M\,:\, |\theta|_q\le R\big\}\,.
$$

Not surprisingly, these almost sparse vectors can be well approximated by sparse vectors as illustrated in the following lemma that generalized~\eqref{EQ:lemA2}

\begin{lemma}
\label{LEM:maurey2}
Fix $\nu>0$, $M\ge 3$ and let and $\mu_j\,, j \in [M]$ such that $\max_j\|\mu_j\|^2\le B^2$. Then
$$
\min_{\theta \in \R^M} \Big\{ \|\mu_\theta -\mu\|^2 + \nu^2|\theta|_0\log\big(1+\frac{eM}{|\theta|_0}\big)\Big\}\le
\inf_{0\le q\le 1}\min_{\theta \in \R^M}  \Big\{ \|\mu_\theta -\mu\|^2 + \varphi_{q,M}(\theta; \nu, B)\Big\}\,,
$$
where
\begin{equation}
\label{EQ:defPHI}
\varphi_{q,M}(\theta; \nu, B)=9\nu^{2-q}|\theta|_q^qB^q \Big[\blog\Big(\frac{eM\nu^q}{B^q |\bar\theta|^q_q\delta}\Big)\Big]^{1-\frac{q}{2}}
 \vee 3\nu^2\blog\big(\frac{eM}{\delta}\big).
\end{equation}
with the convention $|\theta|_0^0=|\theta|_0$.
\end{lemma}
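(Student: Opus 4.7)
The plan is to prove, for each $q\in(0,1]$ and each $\theta\in\R^M$, the existence of a sparse vector $\theta^\circ\in\R^M$ with
$$
\|\mu_{\theta^\circ}-\mu\|^2+\nu^2|\theta^\circ|_0\log\!\Bigl(1+\tfrac{eM}{|\theta^\circ|_0}\Bigr)\le \|\mu_\theta-\mu\|^2+\varphi_{q,M}(\theta;\nu,B).
$$
Minimising over $\theta$ on both sides and taking the infimum over $q$ then yields the lemma; the case $q=0$ is trivial with $\theta^\circ=\theta$. The construction is a \emph{truncation--Maurey hybrid}: the large coordinates of $\theta$ are kept exactly, and the tail is approximated by the classical Maurey empirical device.

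Reorder so $|\theta_{(1)}|\ge|\theta_{(2)}|\ge\cdots$; from $j|\theta_{(j)}|^q\le R^q$ with $R:=|\theta|_q$ one gets the decay $|\theta_{(j)}|\le Rj^{-1/q}$. For an integer $k\ge 1$ to be chosen, let $T$ index the $k$ largest coordinates in magnitude and split $\theta=\theta_T+\theta_{T^c}$. For $q<1$,
$$
|\theta_{T^c}|_1\le R\sum_{j>k}j^{-1/q}\le \frac{q}{1-q}\,R\,k^{\,1-1/q};
$$
for $q=1$ the series diverges, so I skip the truncation ($\theta_T=0$, $|\theta_{T^c}|_1=R$). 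Now set $S:=|\theta_{T^c}|_1$, $p_j:=|\theta_j|/S$ for $j\in T^c$, draw $J_1,\ldots,J_k$ i.i.d.\ from $p$, and define
$$
\tilde\theta:=\frac{S}{k}\sum_{i=1}^{k}\sign(\theta_{J_i})\,e_{J_i},\qquad \theta^\star:=\theta_T+\tilde\theta.
$$
A direct computation gives $\rE[\mu_{\tilde\theta}]=\mu_{\theta_{T^c}}$ and, using $\|\mu_j\|\le B$, the variance bound $\rE\|\mu_{\tilde\theta}-\mu_{\theta_{T^c}}\|^2\le S^2B^2/k$. Because $\rE[\mu_{\theta^\star}]=\mu_\theta$, the cross term in $\|\mu_{\theta^\star}-\mu\|^2-\|\mu_\theta-\mu\|^2$ vanishes in expectation, so
$$
\rE\|\mu_{\theta^\star}-\mu\|^2\le \|\mu_\theta-\mu\|^2+\tfrac{S^2B^2}{k}\le \|\mu_\theta-\mu\|^2+c_q\,R^2B^2\,k^{1-2/q},
$$
with $|\theta^\star|_0\le 2k$ deterministically. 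Consequently some realisation $\theta^\circ$ satisfies the same $\ell_2$ bound together with $|\theta^\circ|_0\le 2k$.

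Plugging $\theta^\circ$ into the left--hand side of the lemma and using monotonicity of $z\mapsto z\log(1+eM/z)$,
$$
\|\mu_{\theta^\circ}-\mu\|^2+\nu^2|\theta^\circ|_0\log\!\Bigl(1+\tfrac{eM}{|\theta^\circ|_0}\Bigr)\le \|\mu_\theta-\mu\|^2+c_qR^2B^2k^{1-2/q}+2\nu^2k\log\!\Bigl(1+\tfrac{eM}{2k}\Bigr).
$$
Balancing the two variable terms leads to
$$
k^\star := \Bigl\lceil (RB/\nu)^q\,L^{-q/2}\Bigr\rceil,\qquad L:=\blog\!\Bigl(\tfrac{eM\nu^q}{R^qB^q\delta}\Bigr),
$$
for which both contributions become of order $\nu^{2-q}R^qB^qL^{1-q/2}$, matching the first branch of $\varphi_{q,M}$. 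When $(RB/\nu)^qL^{-q/2}<1$, I fall back on $k=1$: the penalty $\le 2\nu^2\log(1+eM/2)$ is then absorbed by the second branch $3\nu^2\blog(eM/\delta)$ of the $\vee$.

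The main difficulty is numerical bookkeeping. The truncation constant $q/(1-q)$ blows up as $q\to 1$, so one must rely on the pure Maurey case to cover $q=1$ without truncation and interpolate smoothly. The rounding $k\mapsto\lceil k^\star\rceil$, the passage from $\log(eM/k)$ to $\blog\bigl(eM\nu^q/(R^qB^q\delta)\bigr)$, and the handling of the two branches of the $\vee$ must all be combined so as to fit inside the explicit constants $9$ and $3$ of the statement. None of this requires a new idea beyond the truncation--Maurey decomposition, but the uniform control over $q\in(0,1]$ is where the argument is at its most delicate.
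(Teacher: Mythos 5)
Your skeleton matches the paper's: keep the largest coordinates, approximate the tail by Maurey's empirical method, choose the sparsity level $m=\lceil x\rceil$ with $x=(B|\theta|_q/\nu)^q[\blog(\cdot)]^{-q/2}$, and absorb the degenerate case into the branch $3\nu^2\blog(eM/\delta)$. The genuine gap is in your tail bound. You control $|\theta_{T^c}|_1$ via the pointwise decay $|\theta_{(j)}|\le |\theta|_q j^{-1/q}$ and an integral comparison, which costs a factor $q/(1-q)$; after balancing, the first branch of your bound is inflated by a power of $q/(1-q)$ (with your stated $k^\star$, which ignores this constant, the inflation is $(q/(1-q))^2$), and this is unbounded as $q\to1$. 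Since the lemma must hold for every $q\in(0,1]$ with the fixed numerical constants appearing in $\varphi_{q,M}$, this is not mere bookkeeping, and your claim that ``both contributions become of order $\nu^{2-q}R^qB^qL^{1-q/2}$'' is not true uniformly in $q$. Your proposed rescue---fall back on pure $\ell_1$ Maurey near $q=1$ and ``interpolate smoothly''---does not close the gap: the $\ell_1$ route gives a term of order $\nu B|\theta|_q\sqrt{\blog(\cdot)}$, and asking that this be at most $9\nu^{2-q}B^q|\theta|_q^q[\blog(\cdot)]^{1-\frac q2}$ amounts to $\bigl(B|\theta|_q/(\nu\sqrt{\blog(\cdot)})\bigr)^{1-q}\le 9$, which fails whenever $B|\theta|_q/\nu\ge 9^{1/(1-q)}\sqrt{\blog(\cdot)}$. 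So for $q$ close to $1$ and $B|\theta|_q/\nu$ large, neither of your two regimes delivers $\varphi_{q,M}$ with the stated constants.

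The paper supplies exactly the missing ingredient as a separate lemma (Lemma~\ref{LEM:decay}): a block-wise argument showing $\sum_{j>m}|\theta_{(j)}|\le |\theta|_q\, m^{1-\frac1q}$ with constant $1$, uniformly over $q\in(0,1]$ (group the ordered coefficients into consecutive blocks of length $m$, bound each block by the $\ell_q$ mass of the previous one, and use superadditivity of $x\mapsto x^{1/q}$). Feeding this constant-free bound into your truncation--Maurey construction gives $\|\mu_{\theta^m}-\mu\|^2\le\|\mu_\theta-\mu\|^2+B^2|\theta|_q^2m^{1-\frac2q}$ with no $q$-dependent constant (this is the paper's Lemma~\ref{LEM:maurey}), after which your balancing and case analysis go through essentially as in the paper. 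Without that ingredient, or an equivalent uniform tail estimate, the proof as proposed does not establish the lemma.
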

We postpone the proof to Appendix~\ref{SEC:maurey} where further results on the approximation of vectors with small $\ell_q$ norm by sparse vectors, can be found. We are now in a position to state the main result of this subsection. Its proof follows directly from the above lemma by rounding $\sqrt{66}$ up to $9$ and $\sqrt{5\cdot 396/3}$ to $16$.
\begin{theorem}
\label{TH:spaellq}
Let $\hat \mu_J, J \subset [p]$ be defined as in subsection~\ref{SEC:sparse} with $\pi$ being the sparsity prior defined in~\eqref{EQ:priorSPA}. Moreover, assume that $\max_j\|X_j\|^2\le B^2$ for some $B>0$ and assume that $M \ge 3$. Then,  the following statements hold with probability at least $1-\delta$:
\begin{itemize}
\item[(i)] The $Q$-aggregate estimator $\hat \mu^Q$ with $\lambda=20\sigma^2$ satisfies
\begin{equation}
\label{EQ:soiQ2}
\|\hat \mu^Q-\mu\|^2 \le \inf_{0\le q\le 1}\min_{\beta \in \R^p}\big\{\|\X\beta-\mu\|^2+  \varphi_{q,p}(\theta; 9\sigma, B)\big\}\,.
\end{equation}
\item[(ii)] The aggregate with exponential weights $\hat \mu^{\mathsf{EW}}$ with $\lambda=64\sigma^2$  satisfies
\begin{equation}
\label{EQ:soiEXP2}
\|\hat \mu^\mathsf{EW}-\mu\|^2 \le \frac53\inf_{0\le q\le 1}\min_{\beta \in \R^p}\big\{\|\X\beta-\mu\|^2+   \varphi_{q,p}(\theta; 16\sigma, B)\big\}\,,
\end{equation}
where, in both cases, $\varphi_{q,p}$ is defined in~\eqref{EQ:defPHI}\,.
\end{itemize}
\end{theorem}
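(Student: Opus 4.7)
The plan is to derive Theorem~\ref{TH:spaellq} directly by combining the sparse oracle inequalities of Theorem~\ref{TH:spa} with the Maurey-type approximation in Lemma~\ref{LEM:maurey2}, applied to the family $\mu_j := X_j,\ j\in[p]$, for which $\max_j\|X_j\|^2\le B^2$ and $M=p\ge 3$ by hypothesis.

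For part (i), I first invoke Theorem~\ref{TH:spa}(i) with $\lambda=20\sigma^2$, which gives, with probability at least $1-\delta$,
$$
\|\hat\mu^Q-\mu\|^2 \;\le\; \min_{\beta\in\R^p}\Big\{\|\X\beta-\mu\|^2 + 66\sigma^2|\beta|_0\log\big(\tfrac{2ep}{|\beta|_0\delta}\big)\Big\}.
$$
Since $66 \le 81 = 9^2$, the sparsity penalty is bounded above by $(9\sigma)^2|\beta|_0\log(2ep/(|\beta|_0\delta))$. This is precisely the quantity controlled by the left-hand side of Lemma~\ref{LEM:maurey2} when $\nu=9\sigma$ and the lemma's $\mu_j$'s are taken to be the columns $X_j$. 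Applying the lemma converts the $\ell_0$-penalty into $\inf_{0\le q\le 1}\varphi_{q,p}(\beta;9\sigma,B)$, yielding \eqref{EQ:soiQ2}.

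For part (ii), I apply Theorem~\ref{TH:spa}(ii) with $\lambda=64\sigma^2$, noting that $1+128\sigma^2/(3\lambda)=5/3$ and $6(\lambda+2\sigma^2)=396\sigma^2$. Factoring the $5/3$ out of the minimum gives
$$
\|\hat\mu^{\mathsf{EW}}-\mu\|^2 \;\le\; \tfrac{5}{3}\min_{\beta\in\R^p}\Big\{\|\X\beta-\mu\|^2 + \tfrac{3}{5}\cdot 396\sigma^2\,|\beta|_0\log\big(\tfrac{2ep}{|\beta|_0\delta}\big)\Big\}.
$$
Since $\tfrac{3}{5}\cdot 396 = 237.6 < 256 = 16^2$, I bound the inner penalty by $(16\sigma)^2|\beta|_0\log(\cdot)$ and invoke Lemma~\ref{LEM:maurey2} with $\nu=16\sigma$ to conclude \eqref{EQ:soiEXP2}.

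The step that deserves the most care is reconciling the precise log factor $\log(2ep/(|\beta|_0\delta))$ produced by Theorem~\ref{TH:spa} with the form of the left-hand side of Lemma~\ref{LEM:maurey2}, whose statement writes $\log(1+ep/|\beta|_0)$ on the left while carrying the $\delta$-dependence inside the $\blog$ factors of $\varphi_{q,p}$ on the right. The slack obtained from rounding $66\to 81$ and $237.6\to 256$ should be enough to absorb the contribution of $\log(1/\delta)$ once it is rewritten through the $\blog$ in $\varphi_{q,p}$; once this alignment is verified, the theorem follows by direct substitution, as indicated by the authors' parenthetical remark.
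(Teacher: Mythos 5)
Your proposal is correct and follows essentially the same route as the paper, which proves this theorem exactly by combining Theorem~\ref{TH:spa} (via its corollary) with Lemma~\ref{LEM:maurey2} and rounding $\sqrt{66}$ up to $9$ and $\sqrt{3\cdot 396/5}$ up to $16$. The log-factor reconciliation you flag is not an issue requiring extra slack: the proof of Lemma~\ref{LEM:maurey2} in the appendix bounds the quantity $A$ defined with the $\delta$-dependent penalty $\nu^2|\theta|_0\log\big(\frac{2eM}{|\theta|_0\delta}\big)$, which matches the corollary's penalty verbatim (the $\delta$-free expression in the lemma's statement is an inconsistency of the paper, not of your argument).
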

Both~\eqref{EQ:soiQ2} and~\eqref{EQ:soiEXP2} can be compared to the prediction rates over $\ell_q$ balls that were derived in \cite{RasWaiYu12} where the setup is the following. First, it is assumed that  the true mean $\mu$ in~\eqref{EQ:GMM} is of the form $\mu=\X\beta^*$ for some $\beta^* \in \sB_q(R), R>0$ and that $B=\kappa\sqrt{n}$. In this case, it follows from Theorem~\ref{TH:spaellq} that  with probability at least $1-\delta$, we have for any $\tilde \mu \in \{\hat \mu^Q, \hat \mu^{\mathsf{EW}}\}$ that
$$
\max_{\beta^* \in \sB_q(R)}\frac1n\|\tilde \mu -\X\beta^*\|^2\le C_1\kappa^2R^q\Big[\frac{\sigma^2}{\kappa^2}\frac{\log\Big(\frac{ep}{\delta}\big(\frac{\sigma}{R\kappa\sqrt{n}}\big)^q \Big) }{n} \Big]^{1-\frac{q}2}\vee C_2\sigma^2\log\big(\frac{ ep}{\delta}\big)\,,
$$
for some numerical constants $C_1, C_2$. In their specific regime of parameters, our rates are of the same order as \cite[Theorem~4]{RasWaiYu12} and are therefore optimal in that range. However, we provide a better finite sample performance and explicit dependence in the confidence parameter $\delta$ as well as explicit constants that do not depend on $q$. In particular, our bounds are continuous functions of $q$ on the whole closed interval $[0,1]$.    More strikingly, unlike \cite{RasWaiYu12} neither of the estimators $\hat \mu^Q, \hat \mu^{\mathsf{EW}}$ depends on $q$ or $R$ and yet they optimally adapt to these parameters. This remarkable phenomenon is even better illustrated in the context of \emph{universal aggregation}.

\subsection{Universal aggregation}
\label{sec:univ}
In his original description of aggregation, Nemirovski \cite{Nem00} introduced three types of aggregation to which three new types were added later \cite{BunTsyWeg07b, Lou07, WanPatGao11}. All of these aggregation problems can be described in the following unified way. Given $M\ge 2$ deterministic vectors $\mu_1, \ldots, \mu_M \in \R^n$ and a set $\Theta \subset \R^M$, the goal is to construct an aggregate $\tilde \mu$ such that
\begin{equation}
\label{EQ:oiTheta}
\|\tilde \mu -\mu\|^2 \le \inf_{\theta \in \Theta} \|\mu_\theta -\mu\|^2 + C \Delta_{n,M}(\Theta), \quad C>0
\end{equation}
with high probability and where the remainder term $\Delta_{n,M}(\Theta)>0$ is as small as possible. To each of the six types of aggregation,  corresponds a unique $\Theta \subset \R^M$ and a smallest possible $\Delta_{n,M}(\Theta)$ for which~\eqref{EQ:oiTheta} holds. Such a $\Delta_{n,M}(\Theta)$ is called the
\emph{optimal rate of aggregation} (over $\Theta$) \cite{Tsy03}. The six types of aggregation all correspond to choices of $\Theta$ that are intersections of balls $\sB_q(R)$ for various choices of $q$ and $R$. They are summarized in Table~\ref{TAB:agg}. We add a new natural type of aggregation that we call \emph{$D$-$\ell_q$ aggregation}, where, by analogy to $D$-linear and $D$-convex aggregation, we add to $\ell_q$ aggregation the constraint that $\theta$ must be $D$-sparse. In particular, $D$-convex aggregation introduced in \cite{Lou07} can be identified to $D$-$\ell_1$ aggregation.

\renewcommand{\arraystretch}{1.5}
\begin{table}[h]
\begin{center}
\begin{tabular}{|r|l|c|}
\hline
Type of aggregation & $\Theta$ & Optimal rate \\
\hline
\hline
Model Selection   \cite{Nem00}& $\sB_0(1)\cap\sB_1(1)$ &$\sigma^2\log(\frac{M}{\delta})$ 
\\
Convex  \cite{Nem00}& $\sB_1(1)$ &$\Big[\sigma B\sqrt{\blog(\frac{\sigma M}{\delta B})}\vee\sigma^2\blog(\frac{M}{\delta})\Big]\wedge \sigma^2M \log(\frac1{\delta})$\\
Linear  \cite{Nem00}& $\sB_0(M)=\R^M$ &$\sigma^2M\log(\frac{1}{\delta})$\\
$D$-linear  \cite{BunTsyWeg07b} & $\sB_0(D)$ & $\sigma^2D\log(\frac{M}{\delta D})$\\
$D$-convex  \cite{Lou07} & $\sB_0(D)\cap \sB_1(1)$ &$\big[\sigma B\sqrt{\blog(\frac{\sigma M}{\delta B})}\vee\sigma^2\blog(\frac{M}{\delta})\big]\wedge \sigma^2D\log(\frac{M}{\delta D})$\\
$\ell_q$ \cite{WanPatGao11} & $\sB_q(R)$ &$\Big[\sigma^{2-q}R^qB^q \big[\blog\big(\frac{M}{\delta}\big( \frac{\sigma}{BR}\big)^q\big)\big]^{1-\frac{q}{2}}\vee  \sigma^2 \blog(\frac{M}{\delta})\Big]\wedge \sigma^2 M\log(\frac{1}{\delta})$ \\
$D$-$\ell_q$  & $\sB_0(D)\cap\sB_q(R)$ &$\Big[\sigma^{2-q}R^qB^q \big[\blog\big(\frac{M}{\delta}\big( \frac{\sigma}{BR}\big)^q\big)\big]^{1-\frac{q}{2}}\vee  \sigma^2 \blog(\frac{M}{\delta})\Big]\wedge \sigma^2 D\log(\frac{M}{\delta D})$ \\

\hline
 \end{tabular}
\end{center}
\caption{The seven types of aggregation and the corresponding choice of $\Theta$. The range of parameters is $q \in (0,1), D\in [M], R>0$. All numerical constants have been removed for clarity.} \label{TAB:agg}
\end{table}

While most papers on the subject use different estimators for different aggregation problems \cite{Nem00, Tsy03, RigTsy07, Rig12}, Bunea {\it et al.} \cite{BunTsyWeg07b} were the first to suggest that one single estimator could solve several aggregation problems all at once and used the {\sc bic} estimator to obtain partial results in the form of weak oracle inequalities. More recently, Rigollet and Tsybakov \cite{RigTsy11} showed that the exponential screening estimator solved the first five types of aggregation all at once, without the knowledge of $\Theta$. 
Using similar arguments, we now show that the $Q$-aggregate solves at once, all seven problems of aggregation described in Table~\ref{TAB:agg}, not only in expectation, but also with high probability.

\begin{theorem}
\label{TH:univagg}
Fix, $M\ge 3, n \ge 1, D \in [M], B\ge 1, q \in (0,1), R>0$ and $\delta \in (0,1)$.
 Moreover, fix $\mu_1, \ldots, \mu_M \in \R^n$ such that $\max_j\|\mu_j\|^2 \le B^2$. Then, for $\lambda=20\sigma^2$, the $Q$-aggregate estimator $\hat \mu^Q$ satisfies the following oracle inequalities simultaneously  with probability at least $1-\delta$. For any  $\Theta \in \{\sB_0(1)\cap\sB_1(1),\sB_1(1), \R^M, \sB_0(D), \sB_0(D)\cap \sB_1(1), \sB_q(R), \sB_0(D)\cap\sB_q(R)\}$, it holds
\begin{equation}
\label{EQ:soiQ3}
\|\hat \mu^Q-\mu\|^2 \le \min_{\theta \in \Theta}\|\mu_\theta-\mu\|^2+  C\Delta_{n,M}(\Theta)\,,\quad C>0\,.
\end{equation}
where $\Delta_{n,M}(\Theta)$ is defined in Table~\ref{TAB:agg}.
\end{theorem}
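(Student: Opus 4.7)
The strategy is to identify $\hat \mu^Q$ with the sparsity-pattern $Q$-aggregate of Section~\ref{SEC:sparse} applied to the design matrix $\X=[\mu_1,\ldots,\mu_M]$ with the prior (\ref{EQ:priorSPA}), and then to deduce each of the seven bounds from a single master sparsity oracle inequality combined with Lemma~\ref{LEM:maurey2}. The critical feature is that this estimator is built from $(Y,\mu_1,\ldots,\mu_M)$ alone and does not depend on which $\Theta$ is being considered; consequently, all seven inequalities will follow from one and the same high-probability event on the Gaussian noise.

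The first step is to invoke Theorem~\ref{TH:spa}(i) (equivalently its corollary giving (\ref{EQ:soiQC})) with the chosen design and $\lambda=20\sigma^2$. This produces an event $\Omega_\delta$ of probability at least $1-\delta$ on which
\begin{equation*}
\|\hat \mu^Q-\mu\|^2 \le \min_{\theta \in \R^M}\Big\{\|\mu_\theta-\mu\|^2+66\sigma^2|\theta|_0\log\big(\tfrac{2eM}{|\theta|_0\delta}\big)\Big\},
\end{equation*}
where $\mu_\theta=\X\theta=\sum_j\theta_j\mu_j$. I would henceforth argue on $\Omega_\delta$ so that all seven bounds hold simultaneously. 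For the three purely support-controlled regimes -- model selection ($|\theta|_0=1$), $D$-linear ($|\theta|_0\le D$) and linear ($|\theta|_0\le M$) -- the target rates follow by restricting the minimum to the relevant subset of $\R^M$ and using the monotonicity of $x\mapsto x\log(2eM/(x\delta))$ on $[1,M]$.

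For the $\ell_q$-type regimes, the same $\hat \mu^Q$ inherits the oracle inequality of Theorem~\ref{TH:spaellq}(i) on $\Omega_\delta$, because that result is a purely deterministic consequence of the master sparsity inequality through Lemma~\ref{LEM:maurey2} applied with $\nu=9\sigma$ (so that $\nu^2\ge 66\sigma^2$). Restricting $\theta$ to $\sB_1(1)$ produces the convex rate; restricting to $\sB_q(R)$ produces the $\ell_q$ rate, once one matches the $\vee$ in Table~\ref{TAB:agg} with the two alternatives in the definition (\ref{EQ:defPHI}) of $\varphi_{q,M}$. For the hybrid classes $\sB_0(D)\cap\sB_1(1)$ and $\sB_0(D)\cap\sB_q(R)$, the $\wedge$ in the target rate is immediate: every $\theta$ in the intersection is simultaneously $D$-sparse and lies in the corresponding $\ell_q$-ball, so both valid upper bounds hold on the \emph{same} event $\Omega_\delta$ and their minimum is also valid.

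The main obstacle is neither probabilistic nor analytic but bookkeeping: one has to verify that the constant $C$ in (\ref{EQ:soiQ3}) can be chosen independently of $q\in(0,1)$ and $D\in[M]$, and that the various $\vee$-structures in Table~\ref{TAB:agg} correctly track the trade-off between the two terms that appear in $\varphi_{q,M}(\theta;9\sigma,B)$. This reduces to noting that $\varphi_{q,M}$ depends continuously on $q\in[0,1]$ and that its two ``$\vee$''-alternatives reduce to the appropriate endpoint rates as $q\to 0$ and $q\to 1$; everything else is direct substitution into the master inequality.
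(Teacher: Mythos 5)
Your proposal is correct and follows essentially the same route as the paper: the paper's proof also starts from the sparsity oracle inequality of Theorem~\ref{TH:spa} with $\X_j$ replaced by $\mu_j$ (giving the master bound with constant $66\sigma^2$ on a single event), combines it deterministically with Lemma~\ref{LEM:maurey2} at $\nu=9\sigma$ to get the $\wedge$ of the sparsity and $\varphi_{q,M}$ bounds, and then handles the seven choices of $\Theta$ by exactly the case-by-case restriction and bookkeeping you describe. Your explicit remark that all seven inequalities hold simultaneously because they are deterministic consequences of one high-probability inequality is the same (implicit) logic used in the paper.
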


Note that the rates in Table~\ref{TAB:agg} are optimal in the sense of \cite{Tsy03} for the most interesting ranges of parameters. Indeed, they match the most general lower bounds of~\cite{RigTsy11, RasWaiYu12, WanPatGao11} apart from minor discrepancies that can be erased by placing appropriate assumptions on the range of parameters considered. It is not hard to see from our proofs where the ambiant dimension $M$ can be replaced by the dimension of the linear span of $\mu_1, \ldots \mu_M$ should appear in these bounds \cite{RigTsy11}. Since this is not the main focus of our paper, we choose not to have this dependence explicit in our bounds but in view of the similarity of our proof techniques and that of \cite{RigTsy11}, it is clear that it can be made explicit whenever appropriate by a simple modification of the prior $\pi$.

\appendix

\section{Proofs of the main theorems}
\setcounter{equation}{0}
%
%

The following lemma is key to both of our theorems. It allows us to control the deviation of the empirical risk of any aggregate $\mu_{\hat \theta}$ around its true risk.
\begin{lemmapp}\label{LEM:main}
Fix  $\lambda\geq 20V \sigma^2$. Let $\mu_{\hat{\theta}}=\sum_{k \in [M]}\hat\theta_k \hat \mu_k$,
where $\hat{\theta} \in \Lambda_M$ is any measurable function of~$Y$. Then, for any $j \in [M]$ we have the following inequality with probability at least $1-\delta$,
$$
2\langle \xi, \mu_{\hat{\theta}}-\hat \mu_j\rangle-\lambda\cK(\hat{\theta},\pi)-\sum_{k \in [M]} \hat{\theta}_k C_k
-\frac{8 \sigma^2}{\lambda}\sum_{k \in [M]} \hat \theta_k\|\hat \mu_k-\hat \mu_j\|^2\le \lambda\log\big(\frac1\delta\big)\,.
$$
Moreover,
$$
\E\Big[2\langle \xi, \mu_{\hat{\theta}}-\hat \mu_j\rangle-\lambda\cK(\hat{\theta},\pi)-\sum_{k \in [M]} \hat{\theta}_k C_k
-\frac{8 \sigma^2}{\lambda}\sum_{k \in [M]} \hat \theta_k\|\hat \mu_k-\hat \mu_j\|^2\Big]\le 0\,.
$$
\end{lemmapp}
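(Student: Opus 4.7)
The plan is to apply the PAC-Bayes / Donsker-Varadhan variational duality to linearize the dependence on $\hat\theta$, then reduce the resulting high-probability and in-expectation statements to a single Gaussian Laplace transform bound, one per index $k$. Concretely, using $\sum_k \hat\theta_k = 1$ we rewrite $\mu_{\hat\theta} - \hat\mu_j = \sum_k \hat\theta_k(\hat\mu_k - \hat\mu_j)$ and, dividing the target inequality by $\lambda>0$, define
$$
h_k := \frac{2}{\lambda}\langle \xi,\hat\mu_k - \hat\mu_j\rangle -\frac{C_k}{\lambda}-\frac{8\sigma^2}{\lambda^2}\|\hat\mu_k - \hat\mu_j\|^2,
$$
so that the random quantity of interest equals $\lambda\bigl(\sum_k \hat\theta_k h_k - \cK(\hat\theta,\pi)\bigr)$. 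Donsker-Varadhan duality gives $\sum_k \hat\theta_k h_k - \cK(\hat\theta,\pi)\le \log\sum_k \pi_k e^{h_k}$ pointwise in $Y$, hence for every measurable $\hat\theta \in \Lambda_M$. A standard Chernoff/Markov step then reduces both claims to showing $\max_k \E e^{h_k} \le 1$: the expectation bound is Jensen's inequality applied to $\log$, and the deviation bound follows from $\p(\sum_k \pi_k e^{h_k} > 1/\delta) \le \delta \sum_k \pi_k \E e^{h_k}\le \delta$.

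Next, write $\hat\mu_k-\hat\mu_j = D\xi + v$ where $D=A_k - A_j$ is symmetric (since $A_k,A_j\in\cS_n$) with $\|D\|_{\mathsf{op}}\le V$, and $v = (A_k-A_j)\mu + b_k-b_j$ is deterministic. Expanding yields
$$
h_k = \xi^\top S\,\xi + \xi^\top u -\frac{C_k}{\lambda} -\frac{8\sigma^2}{\lambda^2}\|v\|^2
$$
with $S=\frac{2}{\lambda}D-\frac{8\sigma^2}{\lambda^2}D^2$ and $u=\frac{2}{\lambda}v-\frac{16\sigma^2}{\lambda^2}Dv$. Since the scalar quadratic $1-(4\sigma^2/\lambda)d+(16\sigma^4/\lambda^2)d^2$ has negative discriminant, one has $I - 2\sigma^2 S \succeq \tfrac{3}{4}I$ unconditionally, so the Gaussian Laplace identity
$$
\E e^{\xi^\top S\xi+\xi^\top u} = \det(I-2\sigma^2 S)^{-1/2}\exp\bigl(\tfrac{\sigma^2}{2}u^\top(I-2\sigma^2 S)^{-1}u\bigr)
$$
applies. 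It remains to check that the determinant and quadratic-form contributions are dominated by $C_k/\lambda$ and $8\sigma^2\|v\|^2/\lambda^2$ respectively.

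For the determinant, the elementary bound $-\log(1-y)\le y+y^2$, valid for $y<1/2$, together with $|(4\sigma^2/\lambda)d_i|\le 1/5$ (which uses $\lambda\ge 20V\sigma^2$ and $\|D\|_{\mathsf{op}}\le V$), yields $-\tfrac{1}{2}\log\det(I-2\sigma^2 S)\le \frac{2\sigma^2}{\lambda}\tr(D)+\frac{88\sigma^4}{25\lambda^2}\tr(D^2)$. Because $A_k,A_j\succeq 0$ one has $\tr(A_k A_j)\ge 0$ and $A_k^2\preceq VA_k$, so $\tr(D^2)\le V\bigl(\tr(A_k)+\tr(A_j)\bigr)$ while $\tr(D)=\tr(A_k)-\tr(A_j)$; the $\tr(A_j)$-contributions combine with a net negative sign since $44\sigma^2 V/(25\lambda)<2$, and the $\tr(A_k)$-contribution is absorbed by the choice $C_k=4\sigma^2\tr(A_k)$ with strict slack. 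For the linear term, $(I-2\sigma^2 S)^{-1}\preceq \tfrac{4}{3}I$ and $\|I-(8\sigma^2/\lambda)D\|_{\mathsf{op}}\le 7/5$ give $\tfrac{\sigma^2}{2}u^\top(I-2\sigma^2 S)^{-1}u\le \tfrac{392}{75}\cdot \frac{\sigma^2}{\lambda^2}\|v\|^2<\frac{8\sigma^2}{\lambda^2}\|v\|^2$. Combining, $\log \E e^{h_k}\le 0$, and the lemma follows.

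The main obstacle is the bookkeeping in this last step: the constant $20$ in $\lambda\ge 20V\sigma^2$ must be just large enough that (i) the quadratic Taylor residue $\frac{88\sigma^4}{25\lambda^2}\tr(D^2)$ is absorbed by the surplus in $C_k/\lambda - \frac{2\sigma^2}{\lambda}\tr(D)$, and (ii) the $\|u\|^2$ inflation relative to $\|v\|^2$ is no larger than $8$. Both checks rely crucially on the PSD structure to convert cross-traces $\tr(A_kA_j)$ (which could in principle be negative for general symmetric $A$) into harmless nonnegative quantities and to turn $\tr(A_k^2)$ into $V\tr(A_k)$, thereby keeping everything linear in the traces that $C_k$ was designed to control.
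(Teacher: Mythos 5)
Your proof is correct, and its skeleton---rewriting the target as $\lambda\big(\sum_k\hat\theta_k h_k-\cK(\hat\theta,\pi)\big)$, eliminating $\hat\theta$ by the Donsker--Varadhan variational inequality, and reducing both the deviation and the expectation claims to $\max_k\E e^{h_k}\le 1$ via Markov and Jensen---is essentially the paper's, which performs the same reduction by applying Jensen's inequality to the exponential of the convex combination and absorbing $\exp(-\log(\hat\theta_k/\pi_k))$ into the prior. Where you genuinely diverge is in bounding $\E e^{h_k}$. The paper splits the exponent by Cauchy--Schwarz into a pure quadratic term $P_1$ and a pure linear term $P_2$ (doubling the coefficients), drops the negative semidefinite pieces $-A_j$ and $-B_k^\top B_k$, decouples the remaining quadratic through an SVD and a second Cauchy--Schwarz, and then invokes the Laurent--Massart $\chi^2$ moment-generating-function bound together with a separate Gaussian linear-term lemma from Rigollet (2012), yielding $\sqrt{P_1}\le e^{C_k/\lambda}$ and $\sqrt{P_2}\le e^{196\sigma^2\|v_k\|^2/(25\lambda^2)}$. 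You instead treat the joint quadratic-plus-linear form exactly via the Gaussian Laplace transform, with the clean unconditional observation $I-2\sigma^2 S\succeq\tfrac34 I$ (negative discriminant of $1-y+y^2$), and you control the log-determinant by eigenvalue estimates, using $\tr(A_kA_j)\ge 0$ and $A^2\preceq VA$ to keep everything linear in $\tr(A_k)$ and $\tr(A_j)$; I checked your constants ($\tfrac{88}{25}$, $\tfrac{392}{75}<8$, $2.176<4$) and they work under $\lambda\ge 20V\sigma^2$. Your route is more self-contained (no decoupling, no external MGF lemmas) and yields the same conclusion with visible slack; the paper's route is more modular and avoids the exact determinant identity at the cost of lossier Cauchy--Schwarz factors. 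Two small points: your stated sufficient condition ``$44\sigma^2V/(25\lambda)<2$'' for the $\tr(A_j)$-coefficient to be nonpositive should read $44\sigma^2V/(25\lambda)\le 1$ (it holds with ample margin, $44/500$, under your hypothesis), and you should state explicitly that $\|A_k-A_j\|_{\mathsf{op}}\le V$ rather than $2V$ because both matrices are positive semidefinite with operator norm at most $V$---a fact the paper also relies on.
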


\begin{proof}
Let $\Delta =2 \langle \xi, \mu_{\hat{\theta}}-\hat \mu_j \rangle-\lambda\cK(\hat{\theta},\pi)-\sum_{k \in [M]} \hat{\theta}_k C_k$. Then we have
\begin{align}
&\E\Big[\exp\Big(\frac{\Delta}{\lambda}-\frac{8\sigma^2}{\lambda^2}\sum_{k \in [M]} \hat \theta_k\|\hat \mu_k-\hat \mu_j\|^2\Big)\Big]\nonumber\\
= &\E\Big[\exp\Big(\sum_{k\in [M]}\hat \theta_k\big(\frac{2}{\lambda}\langle \xi, \hat \mu_k-\hat \mu_j \rangle-\log\big(\frac{\hat \theta_k}{\pi_k}\big)-\frac{C_k}{\lambda}-\frac{8\sigma^2}{\lambda^2}\|\hat \mu_k-\hat \mu_j\|^2\big)\Big)\Big]\nonumber\\
\leq &\E\Big[\sum_{k\in [M]}\hat \theta_k\exp\Big(\frac{2}{\lambda}\langle \xi, \hat \mu_k-\hat \mu_j \rangle-\log\big(\frac{\hat \theta_k}{\pi_k}\big)-\frac{C_k}{\lambda}-\frac{8\sigma^2}{\lambda^2}\|\hat \mu_k-\hat \mu_j\|^2\big)\Big)\Big]\quad \text{(Jensen's ineq.)}\nonumber\\
= &\E\Big[\sum_{k\in [M]}\pi_k\exp\Big(\frac{2}{\lambda}\langle \xi, \hat \mu_k-\hat \mu_j \rangle-\frac{C_k}{\lambda}-\frac{8\sigma^2}{\lambda^2}\|\hat \mu_k-\hat \mu_j\|^2\Big)\Big]\label{PR:T1:1}
\end{align}
Observe now that the decomposition \eqref{EQ:GMM} implies that $\hat \mu_k-\hat \mu_j=B_k\xi+v_k$ where $B_k=A_k-A_j$ and $v_k=B_k\mu+b_k-b_j$. It yields
\begin{equation}
\label{PR:T1:2}
\frac{2}{\lambda}\langle \xi, \hat \mu_k-\hat \mu_j \rangle-\frac{8\sigma^2}{\lambda^2}\|\hat \mu_k-\hat \mu_j\|^2=\xi^\top\Big[\frac{2}{\lambda} B_k -\frac{8\sigma^2}{\lambda^2}B_k^{\top} B_k\Big]\xi+ \xi^\top\Big[\frac{2}{\lambda}I_n-\frac{16\sigma^2}{\lambda^2}B_k^{\top}\Big]v_k -\frac{8\sigma^2}{\lambda^2}\|v_k\|^2\,,
\end{equation}
where $I_n$ denotes the identity matrix of $\R^n$. Next, we obtain from the Cauchy-Schwarz inequality that
$$
 \E\Big[\exp\Big(\xi^\top\Big[\frac{2}{\lambda} B_k -\frac{8\sigma^2}{\lambda^2}B_k^{\top}B_k\Big]\xi+ \xi^\top\Big[\frac{2}{\lambda}I_n-\frac{16\sigma^2}{\lambda^2}B_k^{\top}\Big]v_k\Big)\Big]\leq \sqrt{P_1\cdot P_2}\,,
$$


where,
$$
P_1=\E\Big[\exp\Big(\xi^\top\Big[\frac{4}{\lambda}B_k-\frac{16\sigma^2}{\lambda^2}B_k^{\top}B_k\Big]\xi\Big)\Big]\,,\quad
P_2=\E\Big[\exp\Big(\xi^\top\Big[\frac{4}{\lambda}I_n-\frac{32\sigma^2}{\lambda^2}B_k^{\top}\Big]v_k\Big)\Big]\,.
$$
To bound $P_1$, observe that since $A_j$ and  $B_k^{\top}B_k$ both have nonnegative eigenvalues, it holds
$$
\xi^\top\Big[\frac{4}{\lambda}B_k-\frac{16\sigma^2}{\lambda^2}B_k^{\top}B_k\Big]\xi\le \frac{4}{\lambda}\xi^\top A_k \xi=\frac{4}{\lambda}(U_k\xi)^\top D_k (V_k\xi)
$$
where $A_k=U_k^{\top}D_kV_k$ is the singular value decomposition of $A_k$. In particular, the matrices $U_k$, $V_k$ are orthogonal so that the vectors satisfy $Z=U_k\xi\sim \cN(0,\sigma^2I_n)$ and $W=V_k\xi\sim \cN(0,\sigma^2I_n)$. Since $A_k\in S_n$, we know
 $$
 \exp\Big(\frac{4}{\lambda}Z^\top D_k W\Big)\le \exp \Big(\frac{2}{\lambda}\big(Z^{\top}D_k Z+ W^{\top} D_k W \big)\Big).
 $$
 Applying now Cauchy-Schwarz inequality and Lemma~\ref{LEM:LM} yields
\begin{align}
& \E\Big[\exp\big(\frac{4}{\lambda}Z^\top D_k W\big)\Big]\le \sqrt{\E\Big[\exp \big(\frac{4}{\lambda} Z^\top D_k Z\big)\Big]}\sqrt{\E\Big[\exp \big(\frac{4}{\lambda} W^\top D_k W\big)\Big]} \cr \le & \exp\Big(\frac{4\sigma^2}{\lambda}\tr(D_k)+\frac{16\sigma^4\tr(D_k^2)}{\lambda^2-{4\sigma^2}{\lambda}\|D_k\|_{\mathsf{op}}}\Big)\le \exp\Big(\frac{4\sigma^2}{\lambda}\tr(A_k)\big(1+\frac{4\sigma^2V}{\lambda-4\sigma^2V}\big)\Big)
\end{align}

where, in the inequality, we used  the following inequalities: $\tr(D_k^2)\le \tr(D_k)\|D_k\|_{\mathsf{op}}$, $\|D_k\|_{\mathsf{op}}\le V$. Taking now $\lambda\ge 20\sigma^2V$ yields
$$
1+\frac{4\sigma^2V}{\lambda-4\sigma^2V}\le 2
$$
so that $\sqrt{P_1} \le \exp(C_k/\lambda)$, where we recall that $C_k=4\sigma^2\tr(A_k)$ is defined in~\eqref{EQ:defC_j}.

We now bound $P_2$. To that end, observe that it follows from \cite[Lemma~6.1]{Rig12} that
$$
P_2 \le \exp\Big(\frac{8\sigma^2}{\lambda^2}\Big\|\Big(I_n-\frac{8\sigma^2}{\lambda}B_k\Big)v_k\Big\|^2\Big)
$$
Note now that the eigenvalues of $B_k$ belong to $[-V, V]$ so that for $\lambda\ge 20\sigma^2V$, we have
$$
\sqrt{P_2} \le \exp\Big(\frac{196\sigma^2}{25\lambda^2} \|v_k\|^2\Big)
$$
The bounds on $\sqrt{P_1}$ and $\sqrt{P_2}$ together with~\eqref{PR:T1:1} and \eqref{PR:T1:2} yield
$$
\E\Big[\exp\Big(\frac{\Delta}{\lambda}-\frac{8\sigma^2}{\lambda^2}\sum_{k \in [M]} \hat \theta_k\|\hat \mu_k-\hat \mu_j\|^2\Big)\Big]\le 1
$$
The two statements of the lemma follow easily from this bound on the moment generating function using the same arguments as in \cite[Theorem~3.1]{Rig12}. Specifically, the statement with high probability follows from a Chernoff bound and the statement in expectation follows from the inequality $t\le e^t-1$.
\end{proof}

\subsection{Proof of Theorem~\ref{thm:T2}}

For any $\theta \in \Lambda_M$, define
\begin{align*}
\hat{S}(\theta)&=\nu\sum_{k \in [M]}\theta_k\|Y-\hat \mu_k\|^2_2+(1-\nu)\|Y-\mu_\theta\|^2_2\,,\\
S(\theta)&=\nu\sum_{k \in [M]}\theta_k\|\mu-\hat \mu_k\|^2_2+(1-\nu)\|\mu-\mu_{\theta}\|^2_2\,.
\end{align*}
and observe that
$$
\hat{S}(\theta)-S(\theta)=\|Y\|^2_2-\|\mu\|^2_2-2\langle\xi, \mu_{\theta}\rangle\,.
$$
It follows from the definition~\eqref{EQ:defQ} of $\hat{\theta}$, that for any $\theta \in \Lambda_M$, it holds
$$
\hat{S}(\hat{\theta})+\sum_{k \in [M]}\thetaphat C_k+\lambda\mathcal{K}(\hat{\theta},\pi)\leq \hat{S}(\theta)+\sum_{k \in [M]}\theta_k C_k+\lambda\mathcal{K}(\theta,\pi)\,.
$$
The above two displays yield that
\begin{equation}
\label{EQ:diffS}
S(\hat{\theta})- S(\theta)\le\sum_{k \in [M]}(\theta_k-\thetaphat) C_k+2\langle \xi, \hat \mu^Q-\mu_{\theta}\rangle+\lambda\mathcal{K}(\theta,\pi)-\lambda\mathcal{K}(\hat{\theta},\pi)\,.
\end{equation}
Observe first that
\begin{align*}
S(\hat{\theta})- S(\theta)=(1-\nu)\big[\|\mu-\hat \mu^Q\|^2-\|\mu-\mu_{\theta}\|^2\big] + \nu \sum_{k \in [M]}(\hat \theta_k-\theta_k)\|\mu-\hat \mu_k\|^2\,.
\end{align*}
Fix $\beta \in (0,1)$ and take $\theta=(1-\beta)\hat \theta +\beta e_j$ where $e_j$ denotes the $j$th vector of the canonical basis of $\R^M$. It yields
$$
\|\mu-\hat \mu^Q\|^2-\|\mu-\mu_{\theta}\|^2=\beta\big[\|\mu-\hat \mu^Q\|^2-\|\mu-\hat \mu_j\|^2\big]+\beta(1-\beta)\|\hat \mu^Q-\hat \mu_j\|^2\,.
$$
so that
\begin{align*}
\frac{1}{\beta} \big[S(\hat{\theta})- S(\theta)\big]=&(1-\nu)\big[\|\mu-\hat \mu^Q\|^2-\|\mu-\hat \mu_j\|^2\big] + (1-\nu)(1-\beta)\|\hat \mu^Q-\hat \mu_j\|^2\\
&\qquad +\nu \sum_{k \in [M]}\hat \theta_k\|\mu-\hat \mu_k\|^2-\nu \|\mu-\hat \mu_j\|^2\,.
\end{align*}
Together with the identity
\begin{equation}
\label{EQ:varid}
\sum_{k \in [M]} \hat \theta_k\|m-\hat \mu_k\|^2=\sum_{k \in [M]} \hat \theta_k\|\hat \mu^Q-\hat \mu_k\|^2+\|\hat \mu^Q-m\|^2\,,
\end{equation}
applied for $m=\hat \mu_j$  and $m=\mu$ respectively, it yields
\begin{align}
\frac{1}{\beta} \big[S(\hat{\theta})- S(\theta)\big]&=\|\mu-\hat \mu^Q\|^2-\|\mu-\hat \mu_j\|^2 + (1-\nu)(1-\beta)\sum_{k \in [M]}\hat \theta_k\|\hat \mu_j -\hat \mu_k\|^2\label{EQ:diffS2}\\
&\qquad + \big(\nu-(1-\nu)(1-\beta)\big) \sum_{k \in [M]}\hat \theta_k\|\hat \mu^Q-\hat \mu_k\|^2\,.\nonumber
\end{align}
Next, observe that,
$$
2\langle \xi, \hat \mu^Q-\mu_{\theta}\rangle=2\beta\langle \xi, \hat \mu^Q-\hat \mu_j\rangle\,,\quad \sum_{k \in [M]}(\theta_k-\thetaphat) C_k=\beta\Big[ C_j-\sum_{k \in [M]}\thetaphat C_k\Big]\,,
$$
and by convexity,
$$
\cK(\theta, \pi) \le (1-\beta)\cK(\hat \theta, \pi) + \beta \log\big(\frac1{\pi_j}\big)\,.
$$
Substituting the above expressions into~\eqref{EQ:diffS}, together with~\eqref{EQ:diffS2} yields that
\begin{align*}
\|\mu-\hat \mu^Q\|^2-&\|\mu-\hat \mu_{j}\|^2 \\
\le &\Delta + C_j +\lambda\log\big(\frac{1}{\pi_j}\big)\\
&- (1-\nu)(1-\beta)\sum_{k \in [M]}\hat \theta_k\|\hat \mu_j -\hat \mu_k\|^2- \big(\nu-(1-\nu)(1-\beta)\big) \sum_{k \in [M]}\hat \theta_k\|\hat \mu^Q-\hat \mu_k\|^2
\end{align*}
where
$$
\Delta=2\beta\langle \xi, \hat \mu^Q-\hat \mu_j\rangle-\lambda\cK(\hat \theta, \pi)-\sum_{k\in [M]}\hat \theta_kC_k
$$
as in the proof of Lemma~\ref{LEM:main}. Letting $\beta \to 0$ yields
\begin{align*}
\|\mu-\hat \mu^Q\|^2&-\|\mu-\hat \mu_{j}\|^2\\
& \le \Delta + C_j +\lambda\log\big(\frac{1}{\pi_j}\big)- (1-\nu)\sum_{k \in [M]}\hat \theta_k\|\hat \mu_j -\hat \mu_k\|^2+ (1-2\nu)\sum_{k \in [M]}\hat \theta_k\|\hat \mu^Q-\hat \mu_k\|^2\\
&\le \Delta + C_j +\lambda\log\big(\frac{1}{\pi_j}\big)- \min(\nu, 1-\nu)\sum_{k \in [M]}\hat \theta_k\|\hat \mu_j -\hat \mu_k\|^2\,,
\end{align*}
where the second inequality comes from~\eqref{EQ:varid} with $m=\hat \mu_j$ when $\nu \le 1-\nu$ (the case $\nu \ge 1-\nu$ is trivial).
It follows from Lemma~\ref{LEM:main} that
$$
\Delta\le \frac{8\sigma^2}{\lambda}\sum_{k \in [M]} \hat \theta_k\|\hat \mu_k-\hat \mu_j\|^2+ \lambda\log\big(\frac1\delta\big)
$$
with probability at least $1-\delta$ when $\lambda\ge 20 V\sigma^2$, so that taking $\lambda \ge \frac{8\sigma^2}{\min(\nu, 1-\nu,(\frac{5}{2}V)^{-1})}$, completes the proof of~\eqref{EQ:OIQP}. The proof of~\eqref{EQ:OIQE} follows by replacing the last display with the corresponding bound in expectation from Lemma~\ref{LEM:main}.

\subsection{Proof of Theorem~\ref{thm:T1}}
For any $\theta \in \Lambda_M$, define
$$
\hat{S}(\theta)=\sum_{k \in [M]}\theta_k\|Y-\hat \mu_k\|^2 ,\ \ \ S(\theta)=\sum_{k \in [M]}\theta_k\|\mu-\hat \mu_k\|^2,
$$
and observe that
\begin{equation}
\label{PR:T1:3}
\hat{S}(\theta)-S(\theta)=\|Y\|^2-\|\mu\|^2-2\langle\xi, \sum_{k \in [M]}\theta_k\hat \mu_k\rangle.
\end{equation}
It follows from the definition~\eqref{EQ:defEXP} of $\hat{\theta}$, that for any $j \in [M]$, it holds
$$
\hat{S}(\hat{\theta})+\lambda\mathcal{K}(\hat{\theta},\pi)\leq \hat{S}(e_j)+\lambda\log(\frac{1}{\pi_j})+C_j-\sum_{k \in [M]}\hat{\theta}_k C_k\,,
$$
where $e_j$ denotes the $j$th vector of the canonical basis of $\R^M$.
Together with \eqref{PR:T1:3} applied with $\theta=\hat{\theta}$ and $\theta=e_j$ respectively, and the identity
$$
\sum_{k \in [M]}\hat{\theta}_k\|\mu-\hat \mu_k\|^2=\|\mu-\hat \mu^{\mathsf{EW}}\|^2+\sum_{k \in [M]}\thetaphat\|\hat \mu_k-\hat \mu^{\mathsf{EW}}\|^2,
$$
it yields that for any $j \in [M]$, we have
\begin{equation}
\label{PR:T1:4}
\|\mu-\hat \mu^{\mathsf{EW}}\|^2\leq\|\mu-\hat \mu_j\|^2+\lambda\log\big(\frac{1}{\pi_j}\big)+C_j-\sum_{k \in [M]}\thetaphat\|\hat \mu_k-\hat \mu^{\mathsf{EW}}\|^2+\Delta,
\end{equation}
where $\Delta=2\langle\xi,\hat \mu^{\mathsf{EW}}-\hat \mu_j\rangle-\lambda\mathcal{K}(\hat{\theta},\pi)-\sum_{k \in [M]}\thetaphat C_k$.

For any $\lambda\ge 20V\sigma^2$, $\delta>0$,  Lemma~\ref{LEM:main} yields that
$$
\Delta\leq \frac{8\sigma^2}{\lambda}\sum_{k \in [M]}\thetaphat \|\hat \mu_k-\hat \mu_j\|^2+\lambda\log\big(\frac{1}{\delta\pi_j}\big)\,,
$$
with probability at least $1-\delta\pi_j$.
Together with \eqref{PR:T1:4} the identity
$$
\sum_{k \in [M]}\thetaphat\|\hat \mu_k-\hat \mu_j\|^2=\sum_{k \in [M]}\thetaphat\|\hat \mu_k-\hat \mu^{\mathsf{EW}}\|^2+\|\hat \mu^{\mathsf{EW}}-\hat \mu_j\|^2,
$$
it yields
$$
\|\mu-\hat \mu^{\mathsf{EW}}\|^2\leq\|\mu-\hat \mu_j\|^2+\frac{8\sigma^2}{\lambda}\|\hat\mu_j-\mu^{\mathsf{EW}}\|^2+\lambda\log\big(\frac{1}{\delta\pi_j^2}\big)+C_j+\big(\frac{8\sigma^2}{\lambda}-1\big)\sum_{k \in [M]}\thetaphat\|\hat \mu_k-\hat \mu^{\mathsf{EW}}\|^2
$$
Recall that our assumptions imply that $\lambda > 16\sigma^2$ so that
%
$$
\big(1-\frac{16\sigma^2}{\lambda}\big)\|\mu-\hat \mu^{\mathsf{EW}}\|^2\leq (1+\frac{16\sigma^2}{\lambda}) \|\mu-\hat\mu_j\|^2+\lambda\log\big(\frac{1}{\delta\pi_j^2}\big)+C_j.
$$
Next, observe that $(1-x)^{-1}=1+x(1-x)^{-1}\le 1+4x/3$ for $x \in (0,1/4)$ so, for $\lambda\ge 64\sigma^2$, we get 
%

$$
\|\mu-\hat \mu^{\mathsf{EW}}\|^2 \leq \big(1+\frac{128\sigma^2}{3\lambda}\big)\|\mu-\hat\mu_j\|^2+\frac{8\lambda}{3}\log\big(\frac{1}{\delta\pi_j}\big)+\frac{4 C_j}{3}.
$$
The proof is concluded by a union bound.


\subsection{Proof of Theorem~\ref{TH:spa}}

Let $\bar \beta \in \R^p$ realize the minimum in the right-hand side of~\eqref{EQ:soiQ} and let $\bar J \subset [p]$ denote the support of $\bar \beta$.
On the one hand, it follows from the Pythagorean identity that
$$
\|\hat \mu_{\bar J} -\mu\|^2=\|A_{\bar J}Y-\mu\|^2=\|A_{\bar J}\mu -\mu\|^2+\|A_{\bar J}\xi\|^2
$$
Next, since $\|A_{\bar J}\xi\|^2\sim \sigma^2 \chi^2_{\rk(\X_{\bar J})}$, if follows from Lemma~\ref{LEM:LM} together with the inequality $2\sqrt{ab}\le a+b$ valid for $a,b>0$ that with probability at least $1-\delta/2$, we have
$$
\|A_{\bar J}\xi\|^2\le 2\sigma^2\rk(\X_{\bar J})+3\sigma^2\log(2/\delta)\le 2\sigma^2|\bar \beta|_0+3\sigma^2\log(2/\delta)\,.
$$
On the other hand, we get from Theorem~\ref{thm:T2} that with probability at least $1-\delta/2$, it holds
$$
\|\hat \mu^Q-\mu\|^2 \leq \|\hat \mu_{\bar J}-\mu\|^2+C_{\bar J}+\lambda\log\big(\frac{2}{\pi_{\bar J}\delta}\big).
$$
It can be shown \cite{RigTsy12} that
$$
\log(\pi_{\bar J}^{-1})\le 2|\bar J|\log\big(\frac{e p}{|\bar J|}\big)+\frac12\le 2|\bar \beta|_0\log\big(\frac{ep}{|\bar \beta|_0}\big)+\frac12\,.
$$
and we also have that $C_{\bar J}=4\sigma^2\rk(\X_{\bar J})\le 4\sigma^2|\bar \beta|_0$.

Putting everything together yields that with probability at least $1-\delta$, it holds
\begin{align*}
\|\hat \mu^Q-\mu\|^2 &\leq \|A_{\bar J} \mu -\mu\|^2 + 6\sigma^2|\bar \beta|_0+ 2\lambda|\bar \beta|_0\log\big(\frac{ep}{|\bar \beta|_0}\big)+ \frac{\lambda}{2} + (3\sigma^2+\lambda) \log(2/\delta) \\
&\le \|A_{\bar J} \mu -\mu\|^2 + \big(\frac{5\lambda}{2}+6\sigma^2\big)|\bar \beta|_0\log\big(\frac{2ep}{|\bar \beta|_0\delta}\big)\,.
\end{align*}
To conclude the proof of~\eqref{EQ:soiQ}, it suffices to observe that $ \|A_{\bar J} \mu -\mu\|^2 \le  \|\X\bar \beta -\mu\|^2$.

The proof of~\eqref{EQ:soiEXP} follows  along the same lines.

\subsection{Proof of Theorem~\ref{TH:univagg}}

Replacing $\beta$ by $\theta$ and $\X_j$ by $\mu_j$ in the proof of Theorem~\ref{TH:spa} leads to
$$
\|\hat \mu^Q-\mu\|^2 \le \min_{\theta \in \R^M}\big\{\|\mu_\theta-\mu\|^2+  66 \sigma^2|\theta|_0\log\big(\frac{2eM}{|\theta|_0\delta}\big)\big\}\,.
$$
The above display combined with Lemma~\ref{LEM:maurey2} yields that for any $q \in (0,1), R>0$,
$$
\|\hat \mu^Q-\mu\|^2 \le \min_{\theta \in \R^M}\big\{\|\mu_\theta-\mu\|^2+ 66 \sigma^2|\theta|_0\log\big(\frac{2eM}{|\theta|_0\delta}\big)\wedge \varphi_{q,M}(\theta; 9\sigma, B)\big\}\,,
$$
where the function $\varphi_{q,M}$ is defined in~\eqref{EQ:defPHI}. To complete the proof, if suffices that for any $\Theta \in \{\sB_0(1)\cap\sB_1(1),\sB_1(1), \R^M, \sB_0(D), \sB_0(D)\cap \sB_1(1), \sB_q(R), \sB_0(D)\cap\sB_q(R)\}$, there exists $q \in (0,1)$ and $R>0$ such that
$$
\sup_{\theta \in \Theta} \big\{66 \sigma^2|\theta|_0\log\big(\frac{2eM}{|\theta|_0\delta}\big)\wedge \varphi_{q,M}(\theta; 9\sigma, B)\big\}\le C\Delta_{n,M}(\Theta)\,.
$$
In the rest of the proof, we treat each case separately. To that ends, write
$$
\psi(\theta)=66 \sigma^2|\theta|_0\log\big(\frac{2eM}{|\theta|_0\delta}\big)\wedge \varphi_{q,M}(\theta; 9\sigma, B)\,.
$$

{\sc Model Selection aggregation.} If $\Theta=\sB_0(1)$, observe that for any $\theta \in \Theta$,
$$
\psi(\theta)\le 66 \sigma^2|\theta|_0\log\big(\frac{2eM}{|\theta|_0\delta}\big)\le 66\sigma^2\log(\frac{2eM}{\delta})\,.
$$

{\sc $\ell_q$ and convex aggregation.} If $\Theta=\sB_q(R)$ (and in particular, $\Theta=\sB_1(1)$ for convex aggregation), observe that for any $\theta \in \Theta$,
\begin{align*}
\psi(\theta)&\le 66\sigma^2M\log(2e/\delta)\wedge\varphi_{q,M}(\theta; 9\sigma, B)\\
&\le \Big[17(9\sigma)^{2-q}R^qB^q \Big[\blog\Big(\frac{eM}{\delta}\big(\frac{9\sigma}{BR}\big)^q\Big)\Big]^{1-\frac{q}{2}}\vee 198\sigma^2 \blog\big(\frac{eM}{\delta}\big)\,\Big]\wedge 66\sigma^2 M\log\big(\frac{2e}{\delta}\big) \,.
\end{align*}

{\sc Linear aggregation.} If $\Theta=\R^M$, observe that for any $\theta \in \Theta$,
\begin{align*}
\psi(\theta)\le 66 \sigma^2|\theta|_0\log\big(\frac{2eM}{|\theta|_0\delta}\big)\le 66\sigma^2 M\log\big(\frac{2e}{\delta}\big)\,.
\end{align*}

{\sc $D$-linear aggregation.} If $\Theta=\sB_0(D)$, observe that for any $\theta \in \Theta$,
\begin{align*}
\psi(\theta)\le 66 \sigma^2|\theta|_0\log\big(\frac{2eM}{|\theta|_0\delta}\big)\le 66 \sigma^2D\log\big(\frac{2eM}{D\delta}\big)\,.
\end{align*}

{\sc $D$-convex aggregation.} If $\Theta=\sB_0(D)\cap\sB_1(1)$, observe that for any $\theta \in \Theta$,
\begin{align*}
\psi(\theta)&\le 66 \sigma^2|\theta|_0\log\big(\frac{2eM}{|\theta|_0\delta}\big)\wedge\varphi_{1,M}(\theta; 9\sigma, B)\\
&\le \Big[153\sigma B \Big[\blog\Big(\frac{9eM\sigma}{\delta B}\Big)\Big]^{\frac{1}{2}} \vee 198\sigma^2 \blog\big(\frac{eM}{\delta}\big)\, \Big]\wedge 66 \sigma^2D\log\big(\frac{2eM}{D\delta}\big).
\end{align*}

{\sc $D$-$\ell_q$ and $D$-convex aggregation.} If $\Theta=\sB_0(D)\cap\sB_q(R)$  (and in particular, $q=1, R=1$ for convex aggregation), observe that for any $\theta \in \Theta$,
\begin{align*}
\psi(\theta)&\le 66 \sigma^2|\theta|_0\log\big(\frac{2eM}{|\theta|_0\delta}\big)\wedge\varphi_{q,M}(\theta; 9\sigma, B)\\
& \le \Big[ 17(9\sigma)^{2-q}R^qB^q \Big[\blog\Big(\frac{eM}{\delta}\big(\frac{9\sigma}{BR}\big)^q\Big)\Big]^{1-\frac{q}{2}}\vee 198\sigma^2 \blog\big(\frac{eM}{\delta}\big)\Big]\wedge 66\sigma^2 D\log\big(\frac{2eM}{D\delta}\big).
\end{align*}

\section{A generalized Maurey argument}
\label{SEC:maurey}
\subsection{Decay of coefficients on $\ell_q$-balls}
For any $q>0, \theta \in \R^M$, recall that $|\theta|_q$ denotes the $\ell_q$-norm of $\theta$ and is defined by
$$
|\theta|_q=\Big( \sum_{j \in [M]} |\theta_j|^q\Big)^\frac1q\,.
$$
It is known \cite{Joh11} that  if $q<1$, such balls contain sparse signals, in the sense that their coefficients decay at a certain polynomial rate. This is quantified by the following lemma that yields a much sharper result than the one obtained using weak $\ell_q$-balls, especially for $q$ close to $1$.
\begin{lemmapp}
\label{LEM:decay}
Fix $R>0$ and $q \in (0,1)$. For any $\theta \in \sB_q(R)$, let $|\theta_{(1)}|\ge \ldots \ge |\theta_{(M)}|$ denote a non-increasing rearrangement of the absolute values of the coefficients of $\theta$. Then for any integer $m$ such that $1\le m \le M$, it holds
$$
\sum_{j=m+1}^M |\theta_{(j)}| \le |\theta|_qm^{1-\frac1q}\,.
$$
\end{lemmapp}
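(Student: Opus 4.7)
The plan is to exploit the decreasing rearrangement together with the standard pigeonhole inequality $j\,|\theta_{(j)}|^q \le \sum_{k=1}^j |\theta_{(k)}|^q \le |\theta|_q^q$, and then to split the tail term $|\theta_{(j)}|$ into an $\ell_q$ factor and a uniform-size factor.

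First I would establish the pointwise bound $|\theta_{(j)}| \le |\theta|_q\, j^{-1/q}$ for every $j \in [M]$. This follows immediately from monotonicity: since $|\theta_{(1)}| \ge \cdots \ge |\theta_{(j)}|$, we have $j\,|\theta_{(j)}|^q \le \sum_{k=1}^j |\theta_{(k)}|^q \le |\theta|_q^q$. In particular, for indices $j \ge m+1$ one gets the uniform ceiling $|\theta_{(j)}| \le |\theta|_q\, m^{-1/q}$.

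Next I would write each tail term as $|\theta_{(j)}| = |\theta_{(j)}|^{1-q}\cdot |\theta_{(j)}|^q$ (here $1-q \in (0,1)$, which is why $q \in (0,1)$ is used). Applying the uniform bound from the previous step to the $|\theta_{(j)}|^{1-q}$ factor yields
\[
|\theta_{(j)}| \;\le\; \bigl(|\theta|_q\, m^{-1/q}\bigr)^{1-q} |\theta_{(j)}|^q \qquad (j \ge m+1).
\]
Summing over $j = m+1, \ldots, M$ and using $\sum_{j=m+1}^M |\theta_{(j)}|^q \le |\theta|_q^q$ gives
\[
\sum_{j=m+1}^M |\theta_{(j)}| \;\le\; |\theta|_q^{1-q}\, m^{-(1-q)/q}\cdot |\theta|_q^q \;=\; |\theta|_q\, m^{1-1/q},
\]
since $-(1-q)/q = 1-1/q$. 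This is exactly the claimed bound.

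There is no real obstacle here; the proof is a one-line consequence of the pigeonhole inequality and a clever factorization. The only thing worth emphasizing is that the argument uses $q < 1$ only to make the exponent $1-q$ positive, so that applying the uniform ceiling to $|\theta_{(j)}|^{1-q}$ is legitimate; at $q = 1$ the bound degenerates to the trivial $\sum_{j=m+1}^M |\theta_{(j)}| \le |\theta|_1$.
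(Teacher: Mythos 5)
Your proof is correct, and it takes a genuinely different route from the paper. You prove the pointwise decay bound $|\theta_{(j)}|\le |\theta|_q\, j^{-1/q}$ by pigeonhole, then interpolate by writing $|\theta_{(j)}|=|\theta_{(j)}|^{1-q}|\theta_{(j)}|^{q}$, bounding the first factor by the uniform ceiling $\bigl(|\theta|_q m^{-1/q}\bigr)^{1-q}$ (legitimate since $1-q>0$) and summing the second factor against $\sum_{j>m}|\theta_{(j)}|^q\le|\theta|_q^q$; the exponents combine to give exactly $|\theta|_q m^{1-1/q}$. The paper instead uses a blocking argument: it pads the rearranged sequence with zeros, partitions the tail indices into consecutive blocks $B_k$ of length $m$, bounds each coefficient in block $B_k$ by $\bigl(\frac1m\sum_{i\in B_{k-1}}v_i^q\bigr)^{1/q}$, and then sums the blocks using the superadditivity $a^p+b^p\le(a+b)^p$ for $p=1/q\ge 1$. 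Both arguments yield the same constant (namely $1$) and the same exponent, and both use $q\le 1$ only through the sign of $1-\frac1q$ or $1-q$. Your interpolation proof is shorter and more self-contained; the paper's block decomposition is the classical device that also adapts to situations where one needs block-wise control in other norms (e.g.\ $\ell_2$ bounds on successive blocks, as in Maurey/compressed-sensing style arguments), which is in the spirit of the surrounding appendix. Either proof is acceptable here.
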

\begin{proof}
Let $\{v_j\}_{j\ge 1}$ be an infinite sequence such that $v_j=|\theta_{(j)}|$ for $j \in [M]$ and  $v_j=0$ for $j \ge M+1$.
Next for any $k\ge 0$, let $B_k$ denote the block of $m$ consecutive integers defined by $B_k=\{km+1, \ldots, (k+1)m\}$ and observe that
\begin{align*}
\sum_{j=m+1}^M | \theta_{(j)}| =\sum_{j\ge m+1}v_j&=\sum_{k\ge 1}\sum_{j \in B_k}v_j=\sum_{k\ge 1}\sum_{j \in B_k}(v_j^q)^{\frac1q}\\
&\le \sum_{k\ge 1}\sum_{j \in B_k}\Big(\frac{1}{m}\sum_{i \in B_{k-1}}v_i^q\Big)^{\frac1q}\\
&=m^{1-\frac1q}\sum_{k\ge 1}\Big(\sum_{i \in B_{k-1}}v_i^q\Big)^{\frac1q}\\
&\le m^{1-\frac1q}\Big(\sum_{k\ge 1}\sum_{i \in B_{k-1}}v_i^q\Big)^{\frac1q}\\
&=|\theta|_q m^{1-{\frac1q}}\,,
\end{align*}
where in the last inequality, we use the fact that $a^p + b^p \le (a+b)^p$ for any $a,b>0, p\ge 1$.
\end{proof}

\subsection{Proof of Lemma~\ref{LEM:maurey2}}

We begin by an approximation bound {\it a la} Maurey on $\ell_q$ balls.
\begin{lemmapp}
\label{LEM:maurey}
Let $\mu_1, \ldots, \mu_M \in \R^M$ be such that $\max_j\|\mu_j\|^2 \le B^2$. Then for any $\mu \in \R^M$, any $q,\theta$, and any positive integer $m \le M/2$, there exists $\theta^m \in \R^M$ such that $|\theta^m|_0 \le 2m$ and
\begin{equation}
\label{EQ:maurey}
\|\mu_{\theta^m}-\mu\|^2 \le \|\mu_{\theta}-\mu\|^2 + B^2|\theta|_q^2m^{1-\frac2q}\,.
\end{equation}
\end{lemmapp}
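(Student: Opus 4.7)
The plan is to combine a classical Maurey-type probabilistic construction with a preliminary truncation to the largest coefficients of $\theta$. The truncation reduces the problem to approximating a tail vector whose $\ell_1$-norm is small, and the probabilistic step then produces a sparse approximation of that tail. The main obstacle is the tail control: without Lemma~\ref{LEM:decay} one would obtain only a bound in terms of $|\theta|_1$, which for $q\in(0,1)$ is too weak; Lemma~\ref{LEM:decay} converts the $\ell_q$ assumption into precisely the decay rate needed.

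First I would split $\theta=\theta^{(1)}+\theta^{(2)}$, where $\theta^{(1)}$ is the restriction of $\theta$ to the set $S\subset[M]$ of $m$ indices with largest absolute values, and $\theta^{(2)}=\theta-\theta^{(1)}$. Then $|\theta^{(1)}|_0\le m$. Set $T=|\theta^{(2)}|_1$ and draw $k_1,\ldots,k_m$ i.i.d.\ from the probability distribution $p_j=|\theta_j|/T$ supported on $S^c$. Define the random vector
$$
\eta=\frac{T}{m}\sum_{i=1}^m \sign(\theta_{k_i})\,e_{k_i},
$$
where $e_k$ denotes the $k$-th canonical basis vector of $\R^M$. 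The support of $\eta$ lies in $\{k_1,\ldots,k_m\}\subset S^c$, so the candidate $\theta^m=\theta^{(1)}+\eta$ has at most $2m$ nonzero coordinates. Writing $X_i=T\sign(\theta_{k_i})\mu_{k_i}$, a direct computation gives $\E[X_i]=\mu_{\theta^{(2)}}$ and, using the i.i.d.\ structure together with $\|X_i\|\le TB$,
$$
\E\|\mu_\eta-\mu_{\theta^{(2)}}\|^2=\frac{\Var(X_1)}{m}\le\frac{T^2B^2}{m}.
$$

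Next I would expand
$$
\|\mu_{\theta^m}-\mu\|^2-\|\mu_\theta-\mu\|^2=\|\mu_\eta\|^2-\|\mu_{\theta^{(2)}}\|^2+2\langle\mu_\eta-\mu_{\theta^{(2)}},\,\mu_{\theta^{(1)}}-\mu\rangle.
$$
Taking expectation over the random indices kills the cross term since $\E\mu_\eta=\mu_{\theta^{(2)}}$, leaving
$$
\E\|\mu_{\theta^m}-\mu\|^2-\|\mu_\theta-\mu\|^2=\E\|\mu_\eta-\mu_{\theta^{(2)}}\|^2\le\frac{T^2B^2}{m}.
$$
At least one realization of the random vector $\theta^m$ therefore satisfies the pointwise bound $\|\mu_{\theta^m}-\mu\|^2\le\|\mu_\theta-\mu\|^2+T^2B^2/m$, and this realization is the deterministic vector asserted by the lemma.

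It remains to bound $T$ in the required form, which is the crux of the argument. Since $S^c$ indexes the entries $(m+1),\ldots,(M)$ of the non-increasing rearrangement of $|\theta|$, Lemma~\ref{LEM:decay} gives $T\le|\theta|_q\,m^{1-1/q}$ for $q\in(0,1)$, while for $q=1$ the trivial inequality $T\le|\theta|_1$ yields the same form. In either case $T^2B^2/m\le B^2|\theta|_q^2\,m^{1-2/q}$, which completes the proof.
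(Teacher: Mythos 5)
Your proposal is correct and follows essentially the same route as the paper: split off the $m$ largest coordinates, bound the tail's $\ell_1$-norm via Lemma~\ref{LEM:decay}, and apply Maurey's empirical (sampling) method with the bias--variance identity to extract a $2m$-sparse realization. The only cosmetic difference is that you sample exactly from the normalized tail rather than adding an atom at zero as the paper does (which also covers the trivial case of a zero tail), and this changes nothing in the bound.
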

\begin{proof}
Fix $q \in (0,1]$ and $\theta \in \R^M$. Denote by $|\theta_{(1)}|\geq\ldots\geq |\theta_{(M)}|\geq0$ a non-decreasing rearrangement of the absolute value of the coordinates of $\theta$.  Next, decompose the vector $\theta$ into $\theta=\alpha + \beta$ so that $\mu_{\theta}=\mu_\alpha +\mu_\beta$, where $\alpha$ and $\beta$ have disjoint support and $\alpha \in \sB_0(m)$ is supported by the $m$ indices with the largest absolute coordinates of $\theta$.  Since $\theta\in \sB_q$, it follows form Lemma~\ref{LEM:decay} that the $\ell_1$-norm of $\beta=\theta-\alpha$ satisfies
$$
|\beta|_1= \sum_{j=m+1}^M|\theta_{(j)}|\leq |\theta|_q m^{1-\frac1q} =:r\,.
$$
Therefore, $\beta\in r\sB_1$. We now use Maurey's empirical method \cite{Pis81} to find a $m$-sparse approximate of $\mu_\beta$. Define a random vector $U \in \R^M$ with values in $\{0,\pm r\mu_1,\ldots,\pm r\mu_M \}$ by $P[U= r{\rm sign}(\beta_i)\mu_i]=|\beta_i|/r$ and $P[U=0]=1-|\beta|_1/r$. Let $U_1, \ldots, U_m$ be \iid  copies of $U$ and notice that $\E[U]=\mu_\beta$ and $\|U\|\leq r\max_{j}\|\mu_j\|\le r B$. It yields,
 \begin{align*}
   \E\|\mu-\mu_\alpha-\frac{1}{m}\sum_{i=1}^mU_i\|^2=\|\mu-\mu_{\theta}\|^2+\frac{\E\|U-\E U\|^2}{m}\leq \|\mu-\mu_{\theta}\|^2+\frac{(rB)^2}{m}.
 \end{align*}
 Therefore there exists some realization $\mu_{\theta^m}$ of the random vector $\mu_\alpha+\frac{1}{m}\sum_{i=1}^mU_i$  for which \eqref{EQ:maurey} holds and $|\theta^m|_0\leq 2m$.
\end{proof}

We now return to the proof of Lemma~\ref{LEM:maurey2}. Define
$$
A=\min_{\theta \in \R^M} \Big\{ \|\mu_\theta -\mu\|^2 +\nu^2 |\theta|_0\log\big(\frac{2eM}{|\theta|_0\delta}\big)\Big\}\,.
$$
Fix $\theta \in \R^M$ and  define $m =\lceil x\rceil$ where
$$
 x = \frac{B^q| \theta|^q_q}{\nu^q}[\blog\big(\frac{eM\nu^q}{B^q |\theta|^q_q \delta}\big)]^{-\frac{q}{2}}> 0,
$$
First, if $ m > | \theta|_0/2$,  we use the simple bound
$$
A\le \|\mu_{ \theta} -\mu\|^2 +\nu^2 |\theta|_0\log\big(\frac{2eM}{|\theta|_0\delta}\big)\ \le \|\mu_{ \theta} -\mu\|^2+ 2\nu^2  m \blog \big(\frac{eM}{ m \delta}\big).
$$
Next, if $ m \le | \theta|_0/2$, it follows from Lemma~\ref{LEM:maurey} that  there exists $\theta^{ m}$ such that $|\theta^{ m}|_0 \le 2{ m}$ and
\begin{align*}
A&\le  \|\mu_{\theta^m} -\mu\|^2 +2\nu^2 m\log\big(\frac{eM}{m\delta}\big)\\
&\le  \|\mu_{ \theta}-\mu\|^2+ 2\nu^2 m\log\big(\frac{eM}{m\delta}\big)+B^2| \theta|_q^2m^{1-\frac2q}\,.
\end{align*}
Therefore, whether $ m > | \theta|_0/2$ or $ m \le | \theta|_0/2$, it holds for any $\theta \in \R^M$,
\begin{equation}
\label{EQ:boundA}
A\le  \|\mu_{ \theta}-\mu\|^2+ 2\nu^2 m\log\big(\frac{eM}{m\delta}\big)+B^2| \theta|_q^2m^{1-\frac2q}\,.
\end{equation}
To control the right-hand side of~\eqref{EQ:boundA}, consider two cases for the value of $x$.\\
\noindent{\sc Case 1:} If $ x < 1$, we have $m =1$ and we will show $B^2| \theta|^2_q\leq \nu^2\blog(eM/\delta)$. Indeed,
if $B|\theta|_q\le \nu$, then this bound holds trivially and if $B|\theta|_q\ge  \nu$, then $x \ge (B|\theta|_q/\nu)^q[\blog(eM/\delta)]^{-\frac{q}{2}}$. Together with $x<1$, the last inequality implies that $B^2| \theta|^2_q\leq \nu^2\blog(eM/\delta)$.
%
%
%
Therefore, in {\sc Case 1}, we have
$$
A\le \|\mu_{ \theta} -\mu\|^2+3\nu^2(eM/\delta).
$$
\noindent{\sc Case 2:}If $ x\geq 1$, then $ x\leq  m\leq 2 x$. Together with  the fact that $\blog\big(t\blog(t)\big)\leq 2\blog(t)$, for any $t>0$, it yields
\begin{align*}
2\nu^2 m \blog \big(\frac{eM}{ m \delta}\big)+B^2 &| \theta|^2_q { m}^{1-\frac{2}{q}}\leq 4\nu^2 x \blog\big(\frac{eM}{2  x \delta}\big)+B^2 | \theta|^2_q { x}^{1-\frac{2}{q}}\cr
&\leq 16 \nu^{2-q}B^q |\theta|^q_q \Big[\blog\big(\frac{eM\nu^q}{B^q|\theta|^q_q\delta}\big)\Big]^{1-\frac{q}{2}}+ \nu^{2-q}B^q |\theta|^q_q\Big[\blog\big(\frac{eM\nu^q}{B^q |\theta|^q_q\delta}\big)\Big]^{1-\frac{q}{2}}\cr
&\leq 17\nu^{2-q}B^q |\theta|^q_q \Big
[\blog\big(\frac{eM\nu^q}{B^q|\theta|^q_q \delta}\big)\Big]^{1-\frac{q}{2}}\,.
\end{align*}

Putting the two cases together with~\eqref{EQ:boundA}, we get that
\begin{align*}
A\le
\min_{\theta \in \R^M}  \Big\{ \|\mu_\theta -\mu\|^2 +\varphi_{q,M}(\theta; \nu, B)\,\Big\}\,,
\end{align*}
where
$$
\varphi_{q,M}(\theta; \nu, B)\,=3\nu^2\blog\big(\frac{eM}{\delta}\big)
 \vee 17\nu^{2-q}B^q |\theta|^q_q \Big
[\blog\big(\frac{eM\nu^q}{B^q|\theta|^q_q \delta}\big)\Big]^{1-\frac{q}{2}}.
$$

\section{Technical lemmas}

\subsection{Deviations of a $\chi^2$ distribution}
Let us first recall Lemma~1 of~\cite{LauMas00} in a form that is adapted to our purpose. We omit its proof.
\begin{lemmapp}\label{LEM:LM}
Suppose $(Z_1,\cdots, Z_k)$ are i.i.d. standard Gaussian random variables. Let $a_1,\cdots,a_k$ be nonnegative numbers and define
$|a|_{\infty}=\max_{i \in [k]}a_i$, $|a|^2_2=\sum^k_{i=1}a^2_i$.
Let
\[
S=\sum^{k}_{i=1}a_i(Z^2_i-1).
\]
Then for any $u$ such that $0<2|a|_{\infty}u<1$, it holds
\[
\E\left[\exp\left(uS\right)\right]\leq \exp\Big(\frac{|a|^2_2 u^2}{1-2|a|_{\infty}u}\Big).
\]
and for any $t>0$,
$$
\p(S>2|a|_2\sqrt{t}+2|a|_\infty t)\le e^{-t}\,.
$$
\end{lemmapp}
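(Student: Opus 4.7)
The plan is to establish the moment generating function (MGF) bound first and then deduce the tail inequality by an optimized Chernoff argument. The MGF bound rests on independence: since the $Z_i$ are independent, $\E[\exp(uS)] = \prod_{i=1}^k \E[\exp(u a_i(Z_i^2-1))]$, so it suffices to control each factor. For a single standard Gaussian $Z$ and any $s < 1/2$, the classical identity $\E[\exp(s Z^2)] = (1-2s)^{-1/2}$ yields
\[
\log\E[\exp(u a_i(Z_i^2-1))] = -u a_i - \tfrac{1}{2}\log(1-2u a_i),
\]
which is well defined since the hypothesis $2u|a|_\infty<1$ forces $2u a_i < 1$.

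To convert this into the rational bound on the MGF, I would invoke the elementary inequality $-\log(1-x) - x \le x^2/(2(1-x))$ for $x \in [0,1)$, which follows from comparing the tail of the Taylor series $\sum_{k\ge 2} x^k/k$ with the geometric series $\sum_{k \ge 2} x^k/2$. Applied at $x = 2u a_i$, this gives $\log\E[\exp(u a_i(Z_i^2-1))] \le u^2 a_i^2/(1-2u a_i)$. Replacing $a_i$ by the larger $|a|_\infty$ in the denominator and then summing over $i$ collapses the $a_i^2$'s into $|a|_2^2$, producing the claimed MGF bound $\log \E[\exp(uS)] \le u^2|a|_2^2/(1-2u|a|_\infty)$.

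For the deviation statement, I would apply Markov's inequality to $\exp(uS)$ to obtain $\p(S>x) \le \exp(-ux + u^2|a|_2^2/(1-2u|a|_\infty))$, set $x = 2|a|_2\sqrt{t} + 2|a|_\infty t$, and choose
\[
u = \frac{\sqrt{t}}{|a|_2 + 2|a|_\infty\sqrt{t}},
\]
which automatically satisfies $2u|a|_\infty<1$. A short calculation shows that $1-2u|a|_\infty = |a|_2/(|a|_2 + 2|a|_\infty\sqrt{t})$, so the quadratic term collapses to $u|a|_2\sqrt{t}$, and the whole exponent simplifies exactly to $-u(|a|_2\sqrt{t} + 2|a|_\infty t) = -t$.

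The only mildly technical ingredient is the last algebraic simplification; the rest is routine. The step I expect to be the main obstacle for someone deriving this from scratch is spotting the specific value of $u$: naively optimizing the Chernoff exponent in $u$ produces a messy quadratic, whereas this Laurent--Massart--style choice is designed precisely to match the square-root and linear regimes in $t$ simultaneously and yield the clean two-term bound stated in the lemma.
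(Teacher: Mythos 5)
Your proof is correct: the chi-square MGF identity, the elementary bound $-\log(1-x)-x\le x^2/(2(1-x))$, and the Chernoff step with $u=\sqrt{t}/(|a|_2+2|a|_\infty\sqrt{t})$ all check out, and the algebra collapsing the exponent to $-t$ is right. The paper itself omits the proof and simply cites Lemma~1 of Laurent and Massart, and your argument is precisely that standard proof, so there is nothing to compare beyond noting agreement with the cited source.
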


\bibliographystyle{amsalpha}
\bibliography{DaiRigXia13}

\end{document}